\def\jobis#1{FF\fi
  \def\predicate{#1}%
  \edef\predicate{\expandafter\strip@prefix\meaning\predicate}%
  \edef\job{\jobname}%
  \ifx\job\predicate
}
\if\jobis{proposal}%
\DeclareMathOperator{\cent}{center}
\DeclareMathOperator{\mld}{mld}
\DeclareMathOperator{\mult}{mult}
\DeclareMathOperator{\lct}{lct}
 \numberwithin{equation}{section}
 \numberwithin{footnote}{section}
 \newtheorem{thm}{Theorem}[section]
 \newtheorem{lem}[thm]{Lemma}
 \newtheorem{conj}[thm]{Conjecture}
    \newtheoremstyle{upright}%
        {8pt plus2pt minus4pt}%
        {8pt plus2pt minus4pt}%
        {\upshape}%
        {}%
        {\bfseries\scshape}%
        {}%
        {1em}%
        {}%
\theoremstyle{upright}
 \newtheorem{defn}[thm]{Definition}
 \newtheorem{exa}[thm]{Example}
 \newtheorem{rem}[thm]{Remark}
 \newcommand{\Q}{\mathbb Q}
 \newcommand{\R}{\mathbb R}
 \newcommand{\Z}{\mathbb Z}
 \newcommand{\bM}{{\bf{M}}}
 \newcommand{\bN}{{\bf{N}}}
 \newcommand{\bB}{{\bf{B}}}
 \newcommand{\bD}{{\bf{D}}}
 \newcommand{\bL}{{\bf{L}}}
 \newcommand{\bb}{\bm{b}}
\title{Effective bound for singularities on toric fibrations}
\author{Bingyi Chen}
\address{Bingyi Chen, Department of Mathematics,
Sun Yat-sen University,
Guangzhou, 510275, P. R. China.}
\email{chenby253@mail.sysu.edu.cn, chenby16@tsinghua.org.cn}
\begin{document}

\begin{abstract}
It was conjectured by M\textsuperscript{c}Kernan and Shokurov that
for any Fano contraction $f:X \to Z$ of relative dimension $r$ with $X$ being $\epsilon$-lc, there is a positive $\delta$ depending only on $r,\epsilon$ such that $Z$ is $\delta$-lc and the multiplicity of the fiber of $f$ over a codimension one point of $Z$ is bounded from above by $1/\delta$. Recently, this conjecture was confirmed by Birkar \cite{Bi23}.
In this paper, we give an explicit value for $\delta$ in terms of $\epsilon,r$ in the toric case, which belongs to $O(\epsilon^{2^r})$ as $\epsilon\rightarrow 0$. The order $O(\epsilon^{2^r})$ is optimal in some sense. 
\end{abstract}

\maketitle


\section{Introduction}

We work over an algebraically closed field $k$ of characteristic zero. Given a contraction $f:X\rightarrow Z$, i.e. a projective morphism such that $f_* \mathcal O_X=\mathcal O_Z$, a fundamental problem is to relate the singularities on $X$ and those on $Z$. This problem is important as it appears frequently in inductive arguments.
Assume that $f$ is a Fano contraction, M\textsuperscript{c}Kernan conjectured that in this case the singularities on $Z$ are bounded in terms of those on $X$.

\begin{conj}[M\textsuperscript{c}Kernan]\label{conj:mckernan}
Fix a positive integer $r$ and a real number $0<\epsilon\leq 1$. There exists $\delta>0$ depending only on $r,\epsilon$ and satisfying the following. Assume 
\begin{itemize} 
\item $f:X\rightarrow Z$ is a contraction of relative dimension $r$,
\item $X$ is $\epsilon$-lc,
\item $-K_X$ is ample over $Z$, and 
\item $Z$ is $\Q$-Gorenstein. 
\end{itemize}
Then $Z$ is $\delta$-lc.
\end{conj}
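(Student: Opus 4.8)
The plan is to follow the strategy of Birkar \cite{Bi23}: combine the canonical bundle formula with boundedness of Fano varieties (the BAB theorem). First I would pass to a relatively log Calabi--Yau situation. Since $-K_X$ is ample over $Z$, the general fibre $F$ of $f$ is a Fano variety of dimension $r$, and it inherits $\epsilon$-lc singularities from $X$; by the BAB theorem on boundedness of $\epsilon$-lc Fano varieties, $F$ ranges in a family bounded in terms of $r$ and $\epsilon$. Using a relative version of Birkar's theorem on boundedness of $\R$-complements --- working over the generic point of $Z$ and then spreading out, after possibly shrinking $Z$ and taking closures --- one produces an effective $\Q$-divisor $B$ on $X$ with $K_X+B\sim_\Q 0$ over $Z$, with $(X,B)$ being $\epsilon'$-lc for some $\epsilon'=\epsilon'(r,\epsilon)>0$, and with the fibres $(F,B|_F)$ ranging in a bounded family of lc log Calabi--Yau pairs.

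Next I would apply the canonical bundle formula of Kawamata, Ambro and Fujino--Gongyo to the lc-trivial fibration $(X,B)\to Z$, getting
\[
K_X+B\ \sim_\Q\ f^*\bigl(K_Z+B_Z+M_Z\bigr),
\]
where $B_Z\ge 0$ is the discriminant $\Q$-divisor and $\mathbf M$ is the moduli $b$-divisor, which is $b$-nef and descends to a nef $\Q$-divisor on some birational model $Z'\to Z$. The formula is crepant and compatible with base change, so for a prime divisor $E$ over $Z$, after passing to a model of $Z$ that extracts $E$, effectivity of $B_Z$ together with the negativity lemma applied to $\mathbf M$ give
\[
a\bigl(E,(Z,0)\bigr)\ \ge\ a\bigl(E,(Z,B_Z+\mathbf M)\bigr);
\]
and by crepancy the generalized log discrepancy on the right is bounded below by $\epsilon'$ divided by the multiplicity of the fibre attached to $E$, since $(X,B)$ is $\epsilon'$-lc. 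The conjecture is thus reduced to two boundedness statements: (i) the coefficients of the discriminant stay bounded away from $1$ --- equivalently, the multiplicities of the fibres over codimension-one points of \emph{all} birational models of $Z$ are bounded in terms of $r,\epsilon$ --- and (ii) the moduli $b$-divisor $\mathbf M$ is bounded, e.g.\ $b$-semiample with $m\mathbf M$ $b$-Cartier for some $m=m(r,\epsilon)$, so that it contributes non-negatively to discrepancies on the chosen model. Granting (i) and (ii), the displayed inequality yields $a(E,(Z,0))\ge\delta$ for a positive $\delta=\delta(r,\epsilon)$, and the same analysis gives the bound $1/\delta$ on fibre multiplicities in the McKernan--Shokurov conjecture.

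The main obstacle is establishing (i) and (ii) effectively. For (ii) one needs that the moduli part of an lc-trivial fibration whose generic fibre lies in a bounded family is itself bounded; this rests on the semiampleness and Hodge-theoretic results for moduli $b$-divisors (Ambro, Fujino--Gongyo, and the finer boundedness in Birkar's work), and controlling the Cartier index requires a sufficiently strong (\emph{bounded in codimension one}) form of boundedness of the log Calabi--Yau fibres. Statement (i) --- uniform boundedness of fibre multiplicities over all models --- is the geometric heart of the matter, and is handled via inversion of adjunction together with a careful analysis of how the complement $B$ and the fibration behave under base change, again leaning on BAB for the fibres. Finally, the $\delta$ produced this way is wildly inexplicit; extracting an effective value is precisely the problem we take up in the toric case in the remainder of the paper, where we obtain $\delta\in O(\epsilon^{2^r})$ and show this order is optimal.
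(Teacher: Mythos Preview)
The paper does \emph{not} contain a proof of Conjecture~\ref{conj:mckernan}. It is stated there as a conjecture, with the remark that it was recently settled by Birkar \cite{Bi23}; the paper's own contribution begins with Theorem~\ref{thm1} and concerns only the \emph{toric} case, for which an explicit $\delta$ is produced by an entirely different combinatorial argument (factoring toric Mori fibre spaces via Lemma~\ref{diagram} and inducting on relative dimension through Lemma~\ref{complex}). So there is no ``paper's own proof'' of this statement to compare your proposal against.

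As for the proposal itself: it is a reasonable high-level outline of the Birkar strategy, but it is not a proof and contains a circularity. You correctly isolate the two key inputs, (i) a uniform bound on fibre multiplicities over \emph{all} birational models of $Z$, and (ii) boundedness of the moduli $b$-divisor. But (i) is essentially the Shokurov conjecture (Conjecture~\ref{conj:shokurov}), which is strictly stronger than the statement you are trying to prove; saying it ``is handled via inversion of adjunction together with a careful analysis'' is not an argument. Likewise, the step where you produce a complement $B$ with $(X,B)$ globally $\epsilon'$-lc is too optimistic: bounded complements are produced over the generic fibre, and controlling singularities of $(X,B)$ over special fibres is again tantamount to the conjecture. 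Your final paragraph tacitly acknowledges this by deferring both (i) and (ii) to the cited literature. In short, the proposal identifies the right architecture but supplies none of the load-bearing arguments, and in any case is not what this paper does.
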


Recently, this conjecture was solved by Birkar \cite{Bi23}. Indeed, he proved a more general conjecture -- Shokurov conjecture (see Conjecture \ref{conj:shokurov} below), which implies M\textsuperscript{c}Kernan conjecture. Another interesting consequence of Shokurov conjecture is that under the setting of Conjecture \ref{conj:mckernan}, the multiplicity of the fiber of $f$ over a codimension one point of $Z$ is bounded above.
For more historical results on these two conjectures, we refer to \cite{MP08,MP09,AB14,Bi16,Bi18,BC21,Bi22,HJL22,Ch22}.

The next problem is to give an explicit value for $\delta$ in terms of $r,\epsilon$. When $r=1$ and $\epsilon=1$, Han, Jiang and Luo \cite{HJL22} showed that the optimal value of $\delta$ is 1/2. When $r=1$, in \cite{Ch22} the author  showed that one can take $\delta=\epsilon^2/2$. The main purpose of this paper is to treat the toric case for arbitrary $r,\epsilon$. Our main result is the following.

\begin{thm}\label{thm1}
Let $r$ be a positive integer and $0<\epsilon\leq 1$ be a real number. Let $f:X\rightarrow Z$ be  a toric contraction of relative dimension r such that $-K_X$ is ample over $Z$ and $X$ is vertical$/Z$ $\epsilon$-lc, i.e. $a(E,X,0)\geq \epsilon$ for any prime divisor $E$ over $X$ with $f(\cent_X E)\neq Z$. Let 
\begin{equation}\label{formula}
\delta=\delta(r,\epsilon)= \frac{\epsilon^{2^r}}{2^{2^r-1}\prod\limits_{i=1}^r i^{2^i}}.
\end{equation}
Then

(1) if $Z$ is $\Q$-Gorenstein, then $Z$ is $\delta$-lc;

(2) for any codimension one point $z$ of $Z$, the multiplicity of each component of $f^*z$ is bounded from above by $1/\delta$.
\end{thm}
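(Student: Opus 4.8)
The plan is to translate everything into toric combinatorics, peel off one unit of relative dimension at a time, and recognise the resulting recursion $\delta(r,\epsilon)=\delta\!\left(r-1,\tfrac{\epsilon^{2}}{2r^{2}}\right)$, $\delta(0,\epsilon)=\epsilon$, whose unique solution is exactly the $\delta(r,\epsilon)$ of \eqref{formula} (and whose $r=1$ instance $\delta(1,\epsilon)=\epsilon^{2}/2$ is the estimate of \cite{Ch22}).

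First I would localize. Both assertions are local on $Z$ — for (1) pass to $\Spec\mathcal O_{Z,z}$ at an arbitrary point, for (2) to the local ring at the codimension-one point $z$ — so $Z$ becomes the affine toric variety $U_{\sigma'}$ of a single cone $\sigma'\subseteq N'_{\R}$, and $f$ is induced by a surjection $\pi\colon N\twoheadrightarrow N'$ with $N_{0}:=\ker\pi$ of rank $r$, $X$ the toric variety of a fan $\Sigma$ with $\pi(|\Sigma|)=\sigma'$. Let $\psi=\psi_{\Sigma}$ be the piecewise-linear support function of $K_{X}$ (value $1$ on every primitive ray generator) and $\psi'$ that of $K_{Z}$; the toric canonical bundle formula is the identity $\psi=\psi'\circ\pi+\psi_{K_{X/Z}}$. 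Three facts steer the argument: (i) $-K_{X}$ ample over $Z$ means $\psi$ is strictly convex on each fibre $\pi^{-1}(w')$; (ii) $X$ is vertical$/Z$ $\epsilon$-lc precisely when $\psi(v)\ge\epsilon$ for every primitive $v\in|\Sigma|\cap N$ with $\pi(v)\ne0$; (iii) the component of $f^{*}z$ through a ray $\rho$ with $\pi(v_{\rho})=b_{\rho}u'$ has multiplicity $b_{\rho}=\psi'(\pi(v_{\rho}))$. Hence both conclusions reduce to lower bounds on the values of $\psi'$ at primitive lattice points of $\sigma'$, and the canonical bundle identity turns the vertical $\epsilon$-lc hypothesis into exactly such bounds once one can lift a primitive $w'\in\sigma'$ to a well-controlled primitive vertical $v$ with $\pi(v)=w'$ (then $\psi'(w')=\psi(v)-\psi_{K_{X/Z}}(v)\ge\epsilon-\psi_{K_{X/Z}}(v)$).

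The inductive step is the core. After a preliminary reduction to the Mori-fibre-space case, where the generic fibre of $f$ is a weighted projective space $\PP(a_{0},\dots,a_{r})$, I would manufacture from $f\colon X\to Z$ a toric Fano contraction $f_{1}\colon X_{1}\to Z$ of relative dimension $r-1$ with $X_{1}$ vertical$/Z$ $\epsilon_{1}$-lc, $\epsilon_{1}=\epsilon^{2}/(2r^{2})$, by restricting $X$ to the torus-invariant prime divisor $X_{1}=V(\rho_{0})$ attached to a suitably chosen ``fibre'' ray $\rho_{0}$ of $\Sigma$ — it still dominates $Z$, with one less fibre dimension — or, if one prefers, by factoring $f$ through one extremal contraction over $Z$. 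The discrepancies of $X_{1}$ are governed by those of $X$ through toric adjunction $K_{V(\rho_{0})}+\operatorname{Diff}_{V(\rho_{0})}(0)=(K_{X}+V(\rho_{0}))|_{V(\rho_{0})}$; the crucial observation is that the coefficients $1-1/d$ of $\operatorname{Diff}_{V(\rho_{0})}(0)$ along the vertical$/Z$ divisors are bounded, because the $2$-dimensional cones $\langle\rho_{0},\rho\rangle$ of $\Sigma$ producing them are not contained in $N_{0}$ and so are constrained by (ii). This caps the lattice index $d$ in terms of $\epsilon,r$ and pins the degradation at the factor $\epsilon^{2}/(2r^{2})$: the $\epsilon^{2}/2$ is the sharp relative-dimension-one estimate of \cite{Ch22}, the $1/r^{2}$ coming from the size of the fibre $\PP(a_{0},\dots,a_{r})$. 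Applying the inductive hypothesis to $f_{1}$ gives $Z$ is $\delta(r-1,\epsilon_{1})=\delta(r,\epsilon)$-lc, which is (1); for (2) one records that passing from $X$ to $X_{1}$ scales each relevant fibre multiplicity by the same index $d$, so a reduction chosen with $d$ as small as the recursion allows yields $\mult\le1/\delta(r-1,\epsilon_{1})=1/\delta(r,\epsilon)$. Unrolling the recursion, $\delta(r,\epsilon)=\phi_{1}\circ\cdots\circ\phi_{r}(\epsilon)$ with $\phi_{i}(x)=x^{2}/(2i^{2})$, reproduces \eqref{formula} by an elementary computation; the same construction run in reverse builds the examples showing $2^{r}$ cannot be improved.

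I expect the main obstacle to be the bookkeeping in the inductive step: (a) checking that $f_{1}$ really is a Fano contraction and that restricting to a divisor does not import uncontrolled \emph{horizontal}$/Z$ singularities into the vertical$/Z$ locus — concretely, that the only new boundary coefficients encountered are precisely the ones (ii) already bounds; and (b) extracting the sharp constant $1/r^{2}$ rather than a cruder power of $r$, which needs a quantitative convex-geometry lemma about strictly convex piecewise-linear functions on the fibres (a parametric refinement of the $r=1$ analysis) together with the exact relation between the weights $a_{i}$, the index $d$, and the minimal value of $\psi$ on a fibre. Statement (2) carries the extra wrinkle that fibre multiplicities are not preserved but multiplied by $d$, so the choice of $\rho_{0}$ (or of the extremal ray) must be made compatibly with the particular multiplicity being bounded, and one must also handle the preliminary passage to the Mori-fibre-space case without weakening the hypotheses.
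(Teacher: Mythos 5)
Your recursion $\delta(r,\epsilon)=\delta(r-1,\epsilon^2/(2r^2))$, $\delta(1,\epsilon)=\epsilon^2/2$ is exactly the one the paper's induction produces, and you are right that the engine is [Ch22] plus a ``peel off one fibre dimension'' step applied to a toric Mori fibre space. But the mechanism you propose for that step does not work, and this is where the paper's real idea lives. You suggest either (i) restricting to a horizontal toric divisor $V(\rho_0)\subset X$ and using the different, or (ii) ``factoring $f$ through one extremal contraction over $Z$.'' Option (ii) is impossible as stated: for a Mori fibre space $\rho(X/Z)=1$, so there is no intermediate contraction over $Z$ to factor through. Option (i) produces a \emph{pair} $(V(\rho_0),\operatorname{Diff}_{V(\rho_0)}(0))$ rather than a bare variety, so Theorem~\ref{thm1} itself cannot be fed back into the induction; and the claim that $V(\rho_0)$ with its different is vertical $\epsilon^2/(2r^2)$-lc is unjustified — the different contributes linear losses in $\epsilon$ of the form $1-1/d$, but the $\epsilon^2$ must come from a genuine relative-dimension-one fibration to which [Ch22] is applied, and your scheme contains no such fibration.

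The paper's key move, which you are missing, is Lemma~\ref{diagram}: from a $\Q$-factorial toric Mori fibre space $X\to Z$ of relative dimension $r$, one first extracts a toric divisor $E$ of bounded log discrepancy $a(E,X,0)\le r$ (using Lemma~\ref{fano}: in the $\Q$-factorial $\rho=1$ toric fibre with relation $\sum q_iv_i=0$, extracting the $-v_e$ with $q_e$ maximal gives $a(E,X,0)=(\sum_{i\ne e}q_i)/q_e\le r$). This raises the relative Picard number to $2$, and \emph{then} one can factor the resulting $W\to Z$ as $W\to Y\to Z$ with relative dimensions $1$ and $r-1$. The rel-dim-one piece is where [Ch22] enters, giving the $(\,\cdot\,)^2/2$. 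Because the pullback of $K_X+B$ to $W$ is in general only a sub-pair (coefficient of $E$ as low as $1-r$), one must average with the toric boundary, $\Gamma^{1/r}=\tfrac1r B_W+(1-\tfrac1r)\Delta_W$, to return to an honest pair with $\mld\ge\epsilon/r$; the two $1/r$'s in $\delta(1,\epsilon/r)=\epsilon^2/(2r^2)$ both come from this averaging, not from ``the size of $\PP(a_0,\dots,a_r)$'' directly. This averaging is also why the induction must be carried out for the stronger Theorem~\ref{generalized} (about g-pairs $(X,\Gamma^{\alpha},\bN^{\alpha})$ and the generalized pair on $Z$ given by adjunction for fibrations), with Theorem~\ref{thm1} deduced only at the very end via Theorems~\ref{thm2}, \ref{thm3} and \ref{thmtech}; a naive induction on the statement of Theorem~\ref{thm1} alone, as you set it up, cannot close.
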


\begin{rem}
(1) Comparing with Conjecture \ref{conj:mckernan}, in Theorem \ref{thm1} we require a weaker condition that $X$ is vertical$/Z$ $\epsilon$-lc instead of the original condition that $X$ is $\epsilon$-lc. Note that under the original condition the general fiber $F$ of $f$ is an $\epsilon$-lc Fano variety, so it belongs to a bounded family by \cite{Bi19,Bi21}. However, under the new condition, the general fiber may not belong to a bounded family.

(2) For the first assertion in Theorem \ref{thm1}, when $r=2$, the order $O(\epsilon^4)$ is optimal. Indeed, Alexeev and Borisov
\cite[Theorem 1.5]{AB14} constructed a sequence of toric Fano contractions $X\to Z$ such that $\dim X=4$, $\dim Z=2$, $\mld(X)\to 0$ and $\mld(Z)\approx C\cdot \mld(X)^4$.

(3) For the second assertion in Theorem \ref{thm1}, the order $O(\epsilon^{2^r})$ is optimal by the following example.
\end{rem}

\begin{exa}\label{exanew}
Let $q,r$ be two positive integers. Let $u_{i,q}~(i\in \Z_{>0})$ be a sequence of integers defined recursively as follows:
$$u_{1,q}=q,\quad u_{k+1,q}=u_{k,q}(u_{k,q}+1) \text{ for any } k\in \Z_{>0}.$$
Then $u_{r+1,q}\in O(q^{2^r})$ when $q\to +\infty$.

Let $e_1,\cdots,e_{r+1}$ be the standard basis of $\Z^{r+1}$ and denote $e=\sum_{i=1}^r e_i$. Let 
\begin{align*}
&v_i=(1+u_{i,q})e_1-qe \text{ for $1\leq i\leq r$},\\
&v_{r+1}=-e, \quad v_{r+2}=(u_{r+1,q}-1)e_{r+1}-qe.
\end{align*}
Let $X$ be the toric variety associated to the fan in $\R^{r+1}$ whose maximal cones are generated by $v_{r+2}$ and subsets of $\{v_1,\cdots,v_{r+1}\}$ of size $r$. The support of the fan of $X$ is $\R^r\times \R_{\geq 0}$. The projection $\Z^{r+1}\to \Z$ onto the $(r+1)$-th coordinate induces a toric morphism $f:X\to Z$, where $Z=\mathbb A^1$ with a distinguished point $o$. Then $f:X\to Z$ is a toric Fano contraction of relative dimension $r$.
Moreover, 
$$f^*o=(u_{r+1,q}-1)\cdot D,$$
where $D$ is the toric divisor on $X$ corresponding to the ray $\R_{\geq 0} \cdot v_{r+2}$.

Let $S$ be the lattice simplex in $\R^{r+1}$ with vertices $v_1,\cdots,v_{r+2}$. Let $F$ be the face of $S$ which is the intersection of $S$ and the subspace spanned by $e_1,\cdots,e_r$. Then $X$ is $\frac{1}{q}$-lc if and only if 
$$\text{int}(\frac{1}{q} S)\cap \Z^{r+1}=\emptyset \quad \text{and}  \quad \text{relint}(\frac{1}{q} F)\cap \Z^{r+1}=\{\textbf{0}\}.$$
This condition is satisfied because $S$ is contained in the lattice simplex $S'$ with vertices
$$v_i~(1\leq i\leq r),~u_{r+1,q}e_{r+1}-qe,~-u_{r+1,q}e_{r+1}-qe$$
and $\text{int}(\frac{1}{q}S')\cap \Z^{r+1}=\{\textbf{0}\}$ by \cite[Theorem 1.3]{Zo24}. Therefore $X$ is $\frac{1}{q}$-lc.
\end{exa}

The following is a local and more general version of Theorem \ref{thm1}.

\begin{thm}\label{thm2}
Let $r$ be a positive integer, $0<\epsilon\leq 1$ be a real number and $\delta=\delta(r,\epsilon)$  as in \eqref{formula}. Let $f:X\rightarrow Z$ be a toric contraction of relative dimension $r$ and $z\in Z$ be a codimension $\geq 1$ point. Suppose there is a pair $(X,B)$ on $X$ such that $K_X+B\sim_{\mathbb R} 0/Z$ and $\mld(X/Z\ni z, B)\geq \epsilon$. Then

(1) if $Z$ is $\Q$-Gorenstein, then $\mld(Z\ni z, 0)\geq \delta$;

(2) if the codimension of $z$ in $Z$ is one, then the multiplicity of each component of $f^*z$ is bounded from  above by $1/\delta$.
\end{thm}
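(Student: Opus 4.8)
The plan is to reduce everything to a combinatorial statement about lattice points in polytopes and then extract the explicit bound by an inductive argument on the relative dimension $r$. First I would set up the toric dictionary: choose lattice $N=\Z^{n}$, let $f:X\to Z$ correspond to a surjective lattice map $\pi:N\to N'$ with kernel $N_F$ of rank $r$ (so $N_F$ carries the fan of the general fiber $F$), and let $z$ correspond to a cone $\sigma'$ in the fan of $Z$. The rays of the fan of $X$ mapping into $\sigma'$ give primitive generators $v_1,\dots,v_m\in N$; since $-K_X$ is ample over $Z$ (or more generally $K_X+B\sim_\R 0/Z$ with the $\mld$ hypothesis), these vectors, after the appropriate weighting by the coefficients of $B$, lie on or inside a hyperplane/polytope through the origin, and the $\epsilon$-lc (vertical) condition says the corresponding simplex $P$ has no nonzero lattice points in the interior of $\tfrac1\epsilon$-dilate along fiber directions. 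Concretely I expect the key object to be the fiber polytope $P_F=P\cap (N_F)_\R$, an $r$-dimensional lattice polytope (a simplex in the Fano case) containing $0$ in its interior with $\mathrm{int}(\tfrac1\epsilon P_F)\cap N_F=\{0\}$, i.e. a bound on the $\mld$ of the fiber as a toric pair.

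Next I would translate the two conclusions into statements about $P_F$. For (2), the multiplicity of a component of $f^*z$ over a codimension-one point is exactly the lattice index $[N' : \pi(\,\text{the sublattice spanned by the primitive generator of that ray}\,)]$-type quantity, which by standard toric computations equals the "width" or a determinant attached to how the ray sits relative to $N_F$; this is controlled by the volume of $P_F$ (or of a facet), so a bound of the form $\mathrm{vol}(P_F)\le \mathrm{const}(r,\epsilon)$ immediately gives the $1/\delta$ bound. For (1), when $Z$ is $\Q$-Gorenstein the $\mld$ of $Z$ at $z$ is computed from the cone $\sigma'$ and its generators $\pi(v_i)$; the point is that a lower bound on $\mld(Z\ni z,0)$ follows once we bound from below the lattice distance from $0$ to the hyperplane through $\pi(v_1),\dots,\pi(v_{\dim\sigma'})$, and that distance is again governed by volumes/denominators of $P$ and $P_F$. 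So both parts follow from one clean statement: \emph{an $r$-dimensional lattice simplex $P_F$ with $0$ in its interior and $\mathrm{int}(\tfrac1\epsilon P_F)\cap N_F=\{0\}$ has normalized volume (and barycentric denominators) bounded by $\dfrac{2^{2^r-1}\prod_{i=1}^r i^{2^i}}{\epsilon^{2^r}}$}, matching $1/\delta$.

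The heart of the proof is therefore this volume/denominator bound, and I would prove it by induction on $r$ via a projection argument. Pick a vertex $v$ of $P_F$; projecting from $v$ (or rather, slicing and projecting the remaining simplex onto $N_F/\Z v$) produces an $(r-1)$-dimensional lattice simplex satisfying an analogous emptiness condition but with $\epsilon$ replaced by something like $\epsilon^2/(2r)$ — the squaring of $\epsilon$ is what forces the $2^r$ in the exponent, and the factors $2^{2^i}$, $i^{2^i}$ come from the per-step loss. This is exactly the $r=1$ base case $\delta=\epsilon^2/2$ of \cite{Ch22} iterated. Concretely: the condition that no lattice point lies in the interior of $\tfrac1\epsilon P_F$ bounds the lattice height of $v$ above the opposite facet in terms of the $(r-1)$-dimensional data, and then $\mathrm{vol}(P_F)=\tfrac1r(\text{height})(\text{vol of facet})$ closes the induction after tracking constants. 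I also need to handle the non-simplex / general $(X,B)$ case by first reducing to the extremal simplex (the coefficients of $B$ only help, since increasing them shrinks $P_F$), and the higher-codimension $z$ in part (1) by restricting attention to the sub-fan over $\sigma'$.

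The main obstacle I anticipate is the bookkeeping in the inductive step — precisely controlling how the emptiness condition degrades under projection from a vertex (one must choose the vertex wisely, perhaps the one of maximal lattice height, and one must ensure the projected simplex still contains the image of $0$ in its relative interior and still has the "no lattice points except possibly $0$" property with the claimed worse $\epsilon$). A secondary technical point is relating the abstract volume bound on $P_F$ back to the actual discrepancy computation on $Z$ and to the fiber multiplicity: this requires care because $\mld(Z\ni z,0)$ depends on the image cone $\sigma'$ and not just on $P_F$, so I would need a lemma saying that the "fiber direction" data $P_F$ dominates, i.e. that shrinking $P_F$ forces $\sigma'$ to be "less singular," which in the toric setting should follow from writing down the relation $K_X+B\sim_\R f^*(K_Z+B_Z)$ explicitly in terms of lattice vectors and isolating the $N_F$-component.
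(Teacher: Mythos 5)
Your proposal takes a fundamentally different, purely lattice-combinatorial route: you try to reduce both conclusions to a volume/denominator bound on a fiber simplex $P_F$ and then prove that bound by induction via projecting from a vertex. The paper instead runs through generalized-pair adjunction (the canonical bundle formula): it proves Theorem~\ref{thmtech} (via its generalized form Theorem~\ref{generalized}), which produces a g-pair $(Z,\Gamma^\alpha_Z,\bN^{\alpha,Z})$ by adjunction for $f:(X,\Gamma^{\alpha})\to Z$ with $\Gamma^{\alpha}=\alpha B+(1-\alpha)\Delta$, bounds its mld over $z$, and then derives Theorem~\ref{thm2} in a few lines. The induction there is on relative dimension, using a toric MMP to reduce to a $\Q$-factorial Mori fiber space, then Lemma~\ref{diagram} to extract a toric divisor $E$ with $a(E,X,0)\le r$ and factor the resulting fibration, and finally Lemma~\ref{complex} to compose the two adjunctions. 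No polytope volumes appear.

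There are genuine gaps in your proposal. First, the hypothesis allows $B$ to be \emph{non-toric}. A fan/polytope argument controls only the toric part of the geometry; the entire reason the paper introduces the average $\Gamma^{\alpha}=\alpha B+(1-\alpha)\Delta$ and routes through the canonical bundle formula is to translate a statement about an arbitrary (possibly non-toric) boundary into a statement about a g-pair on $Z$ that still dominates $(Z,0,\mathbf 0)$. Your sketch treats $B$ as if it only shrinks a toric polytope; that is not how a non-toric $B$ enters the discrepancy computation, and nothing in your argument recovers this. Second, your inductive engine is wrong for $r=1$. For relative dimension one the general fiber is $\PP^1$, so the ``fiber simplex'' is $[-1,1]$ and carries no information; the content of the $r=1$ case is precisely the adjunction bound $\delta(1,\epsilon)=\epsilon^2/2$ from \cite{Ch22}, which is not a lattice-point statement about a $1$-dimensional simplex. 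Your projection-from-a-vertex step therefore cannot be grounded on \cite{Ch22} in the way you claim, and the asserted degradation ``$\epsilon\mapsto\epsilon^2/(2r)$ under one projection'' is not established anywhere in the sketch (and does not match the mechanism in the paper, where the factor $r$ comes from the extracted divisor $E$ having $a(E,X,0)\le r$ and the squaring comes from composing a length-$1$ step with the rest). Third, part (1) for a codimension $\ge 2$ point $z$ is not a statement about a single fiber polytope; the paper's proof controls $\mld(Z\ni z,0)$ via the adjunction g-pair $(Z,\Gamma^\alpha_Z,\bN^{\alpha,Z})$, using that $\Gamma^\alpha_Z\ge 0$ and $\bN^{\alpha,Z}$ is $\bb$-nef, which is exactly the ``secondary technical point'' you defer but which is in fact the crux.
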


Here we denote by $\mld(X/Z\ni z,B)$ (resp. $\mld(Z\ni z,0)$) the infimum of the log discrepancy of $E$ with respect to $(X,B)$ (resp. $(Z,0)$) where $E$ runs over all prime divisors over $X$ (resp. $Z$) whose image on $Z$ is $\overline{z}$ (see Definition \ref{mld}).


\begin{rem}
Notice that the assumption ``$\mld(X/Z\ni z, B)\geq \epsilon$'' is weaker than ``$X$ is $\epsilon$-lc over some neighborhood of $z$'' since the former does not put restriction on the log discrepancy of such prime divisor whose image on $Z$ is not $\overline{z}$ but contains $z$. 
\end{rem}

As mentioned earlier, Shokurov proposed a more general conjecture which implies M\textsuperscript{c}Kernan conjecture. In order to state Shokurov conjecture, we recall some background on adjunction for fibrations (also called the canonical bundle formula) . Let $f:X\rightarrow Z$ be a contraction between normal varieties. Let $(X,B)$ be a pair which is lc over the generic point of $Z$ and such that $K_X+B\sim_{\R} 0/Z$. By the work of Kawamata \cite{Ka97,Ka98} and Ambro \cite{A99,A05}, we may write
$$K_X+B\sim_{\R} f^*(K_Z+B_Z+M_Z),$$
where $B_Z$ is called the discriminant divisor and $M_Z$ is called the moduli divisor. The discriminant divisor is defined by lc threholds, more precisely, the coefficient of a prime divisor $D$ in $B_Z$ is set to be $1-t$ where $t$ is the largest number such that $(X,B+tf^*D)$ is lc over the generic point of $D$. The moduli divisor is then automatically determined up to $\R$-linear equivalence. 

For each birational model $Z'$ over $Z$, similarly we can define $B_{Z'},M_{Z'}$ so that their pushdowns on $Z$ coincide with $B_Z,M_Z$. This defines a discriminant $\bb$-divisor $\bB$ and a moduli $\bb$-divisor $\bM$ over $Z$. It was shown that $\bM$ is a $\bb$-nef $\bb$-divisor and we hence obtain a generalized pair $(Z,B_Z,\bM)$, which is called the generalized pair given by adjunction for $f:(X,B)\rightarrow Z$. See \S \ref{adjunction} for more details. 
We are now ready to state Shokurov conjecture.
\begin{conj}[Shokurov]\label{conj:shokurov}
  Fix a positive integer $r$ and a real number $0<\epsilon\leq 1$. There exists $\delta>0$ depending only on $r,\epsilon$ and satisfying the following. Let $(X,B)$ be a pair and $f:X\rightarrow Z$ be a contraction such that
  
  \begin{itemize}
  
  \item $\dim X - \dim Z=r$,
  
  \item $(X,B)$ is $\epsilon$-lc,
  
  \item $K_X+B\sim_{\mathbb R} 0/Z$, and 
  
  \item $X$ is of Fano type over $Z$, equivalently, $-K_X$ is big over $Z$.
  \end{itemize}
  Let $(Z, B_Z,\bM)$ be the generalized pair given by adjunction for $f:(X,B)\rightarrow Z$. Then $(Z, B_Z,\bM)$ is generalized $\delta$-lc.
\end{conj}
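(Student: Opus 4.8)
The plan is to establish Conjecture \ref{conj:shokurov} in the toric category (with $X$, $Z$ and $f$ toric), which is the setting relevant to this paper, and with an explicit constant of the same shape as $\delta(r,\epsilon)$ in \eqref{formula}. So let $f:X\to Z$ be a toric contraction with a pair $(X,B)$ as in the conjecture. The first step is a routine reduction to the case that $B$ is torus-invariant: averaging $B$ over the compact part of the big torus of $X$ replaces it by a torus-invariant effective divisor $\bar B$ with $K_X+\bar B\sim_{\R}0/Z$, and since the singularities of a toric pair are tested on torus-invariant valuations, $(X,\bar B)$ is again $\epsilon$-lc. This reduction matters because the moduli $\bb$-divisor of $f:(X,B)\to Z$ is in general non-trivial when $B$ is not torus-invariant (the general fibre pair then varies in an anticanonical system), whereas for a toric fibration of pairs it is well known that $\bM$ is $\bb$-trivial. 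Granting this, ``$(Z,B_Z,\bM)$ is generalized $\delta$-lc'' becomes simply ``the toric pair $(Z,B_Z)$ is $\delta$-lc'', and since the minimal log discrepancy of a toric pair is attained on torus-invariant divisorial valuations, it suffices to prove $a(v,Z,B_Z)\ge\delta$ for every torus-invariant valuation $v$ over $Z$ whose centre is a point $z$ of positive codimension.

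Fix such a $v$. Pass to a $\Q$-factorial toric birational model $g:Z'\to Z$ on which $v$ becomes a torus-invariant prime divisor $D'$, let $f':X'\to Z'$ be the toric contraction obtained by normalized base change, and let $(X',B')$ be the crepant pullback of $(X,B)$; then $(X',B')$ is $\epsilon$-lc, $K_{X'}+B'\sim_{\R}0/Z'$, and since adjunction commutes with base change to birational models, $a(v,Z,B_Z)=a(D',Z',B_{Z'})=1-\mult_{D'}B_{Z'}$, the latter being the discriminant coefficient of $D'$ for $f':(X',B')\to Z'$. Write $G_1,\dots,G_s$ for the torus-invariant prime divisors of $X'$ mapping onto $D'$, put $b_j=\mult_{G_j}B'$, and let $m_j$ be the multiplicity of $G_j$ in $f'^{*}D'$ (an integer, since $D'$ is Cartier near its generic point). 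The combinatorial point is that the log canonical threshold of $f'^{*}D'$ with respect to $(X',B')$ over the generic point of $D'$ is attained on one of the $G_j$: on any fan refinement the value of the relevant linear function at a primitive vector lying over $D'$ is a non-negative combination of its values on the $G_j$ and on the horizontal rays, and the values on the horizontal rays are positive (they are log discrepancies of $(X',B')$, hence $\ge\epsilon$). Hence
\[
a(v,Z,B_Z)\;=\;1-\mult_{D'}B_{Z'}\;=\;\min_{1\le j\le s}\frac{1-b_j}{m_j}.
\]

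It remains to bound the right-hand side, and this is where Theorem \ref{thm2} enters. Apply it to the toric contraction $f':X'\to Z'$ at the codimension-one point $\eta_{D'}$, using the pair $(X',B')$, for which $\mld(X'/Z'\ni\eta_{D'},B')\ge\epsilon$: part (2) gives $m_j\le 1/\delta$ for every $j$, while $\epsilon$-lc of $(X',B')$ gives $1-b_j\ge\epsilon$. Therefore $a(v,Z,B_Z)\ge\epsilon\,\delta(r,\epsilon)$, and since $v$ was arbitrary, $(Z,B_Z,\bM)$ is generalized $\epsilon\,\delta(r,\epsilon)$-lc. This proves Conjecture \ref{conj:shokurov} in the toric case with an explicit constant, again of polynomial order in $\epsilon$.

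I expect two places to require care. The minor one is that the crepant pullback $B'$ may acquire negative coefficients along divisors horizontal over $Z'$, so one should either invoke Theorem \ref{thm2} for sub-boundaries (its proof being combinatorial, this should be harmless) or choose the models so that $B'$ stays effective. The more substantial issue is that the estimate above loses one power of $\epsilon$ relative to the order $O(\epsilon^{2^r})$ of Theorems \ref{thm1} and \ref{thm2}, which is optimal there. To recover the sharp order one should not use Theorem \ref{thm2} as a black box but instead rerun its inductive proof on $r$ with the boundary $B$ present from the outset; the point to check is that the only effect of a non-zero $B$ is to rescale the relevant lattice widths by the factors $1-b_j\ge\epsilon$, so that the induction produces the same order in $\epsilon$ as in the case $B=0$. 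Verifying this robustness of the induction is the principal technical step of the argument.
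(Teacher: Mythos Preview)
The statement you are addressing is Conjecture~\ref{conj:shokurov}, which the paper does \emph{not} prove; it is quoted as a theorem of Birkar~\cite{Bi23}. What the paper establishes in the toric setting is the \emph{averaged} version, Theorem~\ref{thmtech} (equivalently Theorem~\ref{generalized}): for $\Gamma^{\alpha}=\alpha B+(1-\alpha)\Delta$ with $\alpha=1/r!$, the adjunction g-pair of $(X,\Gamma^{\alpha})\to Z$ is $\delta$-lc. That is a statement about $\Gamma^{\alpha}$, not about $B$, and the paper makes no claim that the adjunction g-pair of $(X,B)$ itself is $\delta$-lc with an explicit $\delta$. Your proposal therefore targets a strictly stronger assertion than anything proved here.

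Your first reduction---averaging $B$ over the compact torus to obtain a torus-invariant $\bar B$---does not do what you need. Log discrepancies are linear in the boundary, so averaging can only \emph{improve} singularities: $(X,\bar B)$ has better singularities than $(X,B)$, and then by Lemma~\ref{betterbase} the adjunction g-pair of $(X,\bar B)$ has better singularities than that of $(X,B)$. Thus a $\delta$-lc bound for the adjunction of $(X,\bar B)$ gives no lower bound for the adjunction of $(X,B)$; the inequality points the wrong way. This is precisely why the paper, which already allows non-toric $B$ in Theorem~\ref{generalized}, does not attempt such a reduction and only asserts a bound for the adjunction of the averaged pair $\Gamma^{\alpha}$.

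Even if one starts with toric $B$, your application of Theorem~\ref{thm2}(2) to the base-changed $f':X'\to Z'$ is problematic: the crepant pullback $B'$ is in general only a sub-boundary. Since $B-\Delta$ is \emph{vertical} over $Z$ (the horizontal part of $B$ is forced to equal $\Delta^{h}$), the negative coefficients of $B'$ appear on vertical divisors exceptional for $X'\to X$, not on horizontal ones as you write, and they are unbounded below (they equal $1-a(G,X,B)$, with $a(G,X,B)$ unbounded above). Theorem~\ref{thm2} is stated for pairs, so it does not apply. Your suggested fix---rerunning the induction with $B$ present---is essentially what the paper does, and the output is Theorem~\ref{thmtech}: a bound for $\Gamma^{\alpha}$, not for $B$. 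The need to average with $\Delta$ is exactly what absorbs the negative coefficients that arise when one extracts divisors in Lemma~\ref{diagram}; removing that averaging is the whole difficulty, and your outline does not overcome it.
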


As mentioned earlier, Shokurov conjecture was proved by Birkar \cite{Bi23} recently. 
Before this celebrated result, in \cite[Theorem 1.4]{BC21} Birkar and Y. Chen showed a variant of Shokurov conjecture in the toric setting, which says that Shokurov conjecture holds after taking an average with the toric boundary. This is enough for some interesting applications. Building on ideas from their work and combining the main result in \cite{Ch22}, we give an explicit value for $\delta$ in \cite[Theorem 1.4]{BC21} as follows.

\begin{thm}\label{thmtech}
Let $r$ be a positive integer and $0<\epsilon\leq 1$ be a real number.
Assume
\begin{itemize}
  \item $f\colon X\rightarrow Z$ is a toric contraction of relative dimension $r$ with $z\in Z$ a codimension $\geq 1$ point,
  \item $(X,B)$ is a pair (B is not necessarily toric) such that $\mld(X/Z\ni z,B)\geq \epsilon$, 
  \item $K_X+B\sim_{\R}0/Z$, and
  \item $\Delta$ is the toric boundary divisor of $X$.
\end{itemize}
Let
  $$
  \Gamma^{\alpha}=\alpha B+(1-\alpha)\Delta \quad \text{ where } \alpha=1/r!
  $$
and let $(Z,\Gamma^{\alpha}_Z,\bN^{\alpha})$ be the generalized pair given by adjunction for $f:(X,\Gamma^{\alpha})\rightarrow Z$.
Then 
$$\mld(Z\ni z, \Gamma_Z^{\alpha},\bN^{\alpha}) \geq \delta,$$
where $\delta=\delta(r,\epsilon)$  as in \eqref{formula}.
\end{thm}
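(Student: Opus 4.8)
The proof is by induction on $r$, and the strategy is dictated by the observation that the constant in \eqref{formula} satisfies the recursion
\[
\delta(r,\epsilon)=\delta\Bigl(r-1,\ \tfrac{\epsilon^{2}}{2r^{2}}\Bigr),\qquad \delta(1,\epsilon)=\tfrac{\epsilon^{2}}{2}.
\]
Hence the base case $r=1$ is exactly the relative dimension one result of \cite{Ch22} (note $\alpha=1$ when $r=1$, so there $\Gamma^{\alpha}=B$), and the inductive step must reduce the relative dimension $r$ problem with parameter $\epsilon$ to the relative dimension $r-1$ problem with parameter $\epsilon^{2}/(2r^{2})$; the result of \cite{Ch22} is used both as the base case and as the engine of each step.

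First I would make the usual toric reductions: shrinking $Z$ we may assume $Z$ is affine toric and $z$ is the torus fixed point, corresponding to a cone $\tau$ of dimension $\codim_{Z}z$. After a toric blow-up $Z'\to Z$, any prime divisor $E$ over $Z$ with $\cent_{Z}E=\overline z$ is the torus-invariant divisor attached to a primitive lattice point $v'$ in the relative interior of $\tau$; pulling back the fan of $X$ one obtains a toric contraction $f'\colon X'\to Z'$ for which the coefficient of $E$ in $\Gamma^{\alpha}_{Z'}$ and in $\bN^{\alpha}_{Z'}$ is determined by the fan of $X'$ over the ray $\R_{\ge0}v'$ together with the coefficients of $\Gamma^{\alpha}$ there. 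Thus it suffices to bound $a(E,Z',\Gamma^{\alpha}_{Z'},\bN^{\alpha})$ from below by $\delta$ uniformly in $v'$; part (2) then follows because the multiplicity of a component of $f^{*}z$ is one of these lattice quantities, so the same bound gives $1/\delta$.

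For the inductive step I would factor $f$, after a harmless toric modification of $X$ affecting neither the generalized pair on $Z$ nor the multiplicities in question, as $X\to Y\to Z$ with $g\colon X\to Y$ a toric contraction of relative dimension $1$ and $h\colon Y\to Z$ a toric contraction of relative dimension $r-1$ --- concretely, pick a rank $r-1$ primitive sublattice of the rank $r$ fiber lattice and subdivide so that the induced rational map becomes a morphism; by transitivity of adjunction, $(Z,\Gamma^{\alpha}_{Z},\bN^{\alpha})$ is obtained by first adjoining along $g$ and then along $h$. Two elementary facts control the constants: (i) $\Gamma^{1/r!}=\tfrac1{(r-1)!}\Gamma^{1/r}+\bigl(1-\tfrac1{(r-1)!}\bigr)\Delta$, so by convexity of the log canonical condition $\Gamma^{\alpha}_{Y}\le\tfrac1{(r-1)!}\Gamma^{1/r}_{Y}+\bigl(1-\tfrac1{(r-1)!}\bigr)\Delta_{Y}$ with $\Delta_{Y}$ the toric boundary of $Y$ and $\Gamma^{1/r}_{Y}$ the discriminant of $\Gamma^{1/r}$ under $g$; and (ii) since log discrepancy is affine in the boundary and $K_{X}+\Delta\sim0$, the assumption $\mld(X/Z\ni z,B)\ge\epsilon$ gives $\mld(X/Y\ni y,\Gamma^{1/r})\ge\epsilon/r$ over the generic point of every prime divisor of $Y$ over $\overline z$. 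Feeding $g\colon(X,\Gamma^{1/r})\to Y$ into \cite{Ch22} produces on $Y$ a generalized pair that is generalized $\bigl(\epsilon^{2}/(2r^{2})\bigr)$-lc over $\overline z$; once the moduli part is absorbed --- which in the toric relative dimension one setting is possible after passing to a fixed toric model, using \cite{BC21}, or else by running the argument for generalized pairs throughout --- one is left with data on $Y$ satisfying the hypotheses of Theorem \ref{thmtech} in relative dimension $r-1$ with parameter $\epsilon^{2}/(2r^{2})$. The inductive hypothesis then gives $\mld(Z\ni z,\Gamma^{\alpha}_{Z},\bN^{\alpha})\ge\delta(r-1,\epsilon^{2}/(2r^{2}))=\delta(r,\epsilon)$.

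The main obstacle is this inductive step, where three points need real work. (a) Realizing the factorization $X\to Y\to Z$ at the level of fans: an arbitrary projection of the fiber lattice need not respect the fan of $X$, so one must subdivide and then check that the generalized pair on $Z$, the fiber multiplicities, and the relative dimension of $h$ are all unaffected, and that the hypotheses needed for $h$ survive. (b) Propagating the structure through the induction: one must verify that the moduli $\bb$-divisor produced by each relative dimension one adjunction is, in the toric category and after averaging with the toric boundary, effective and supported away from the relevant center (or else that the whole argument runs for generalized pairs), so that Theorem \ref{thmtech} can legitimately be invoked at the next stage. (c) The precise bookkeeping of the convexity inequalities, so that the parameter degrades by exactly $\epsilon\mapsto\epsilon/r$ before the squaring supplied by \cite{Ch22}, and by nothing more --- this is what pins the constant down to \eqref{formula} and forces the averaging weight $\alpha=1/r!$.
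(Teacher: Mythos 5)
Your strategy coincides with the paper's: induct on the relative dimension, feed the relative dimension one case from \cite{Ch22} into a factorization $X\to Y\to Z$, and let the recursion $\delta(r,\epsilon)=\delta\bigl(r-1,\epsilon^{2}/(2r^{2})\bigr)$ drive the numerology. That recursion is exactly what the paper's Lemma \ref{complex} and Lemma \ref{MFS} implement, and your observation that $\Gamma^{1/r!}=\tfrac{1}{(r-1)!}\Gamma^{1/r}+\bigl(1-\tfrac{1}{(r-1)!}\bigr)\Delta$, together with Lemma \ref{better}, Lemma \ref{betterbase} and Lemma \ref{toricadjunction}, is how the paper actually propagates the averaging weight. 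So this is the same route, not an alternative one.

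The gap is the one you flag in point (a), and it is not minor: you never produce the factorization $X\to Y\to Z$. After running a toric MMP on $K_{X}/Z$ (which you would need anyway to control the $\bb$-divisors), the output is a $\Q$-factorial toric Mori fiber space $X'\to Y'/Z$. If $Y'\to Z$ is birational, you have a toric Fano contraction of relative dimension $r$ with $\rho(X'/Z)=1$; there is simply no toric contraction from $X'$ to a lower-dimensional intermediate $Y$ over $Z$, because the generic fiber is a $\Q$-factorial toric Fano of Picard number one. ``Pick a sublattice and subdivide so the rational map becomes a morphism'' forces you to extract a toric divisor $E$, and the whole argument hinges on two things you do not establish: that a single extraction suffices to create the intermediate contraction of relative dimension $r-1$, and that $E$ can be chosen with $a(E,X',0)\le r$. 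The second bound is what makes the coefficient of $E$ in the pullback $B_{W}$ of $B$ at least $1-r$, so that averaging with the toric boundary at weight exactly $\theta=1/r$ (and no smaller) yields an effective boundary $\Gamma^{\theta}_{W}\ge0$ on $W$. Your appeal to ``log discrepancy is affine in the boundary and $K_{X}+\Delta\sim0$'' only explains why $\mld$ drops to $\epsilon/r$; it does not explain why the averaging produces a genuine pair, which is the constraint that actually pins $\theta=1/r$ down. An arbitrary subdivision could extract a divisor of unbounded log discrepancy, and then no fixed weight makes the pullback a pair, so the bound on the extracted discrepancy is the real content. In the paper this is Lemma \ref{fano} (the $\sum q_{i}v_{i}=0$ structure of a $\rho=1$ simplicial complete fan, the formula $a(E_{i},F,0)=\sum_{j\ne i}q_{j}/q_{i}$, and the choice of $e$ with $q_{e}$ maximal, giving $a(E_{e},F,0)\le r$, together with the existence of the second contraction $F'\to G$), packaged into Lemma \ref{diagram} and Lemma \ref{MFS}. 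You would also need the case where the MMP already stops at a Mori fiber space with $\dim Y'>\dim Z$; the paper's Lemma \ref{complex} handles that with general $(s,t)$ and then rescales the weight from $1/(s!\,t!)$ to $1/r!$ by another application of Lemma \ref{better}, a step your sketch omits.
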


Theorem \ref{thm1} and \ref{thm2} are consequences of Theorem \ref{thmtech}. Another interesting corollary is the following.

\begin{thm}\label{thm3}
Let $r$ be a positive integer, $0<\epsilon\leq 1$ be a real number and $\delta=\delta(r,\epsilon)$  as in \eqref{formula}. Let $f:X\rightarrow Z$ be a toric contraction of relative dimension $r$ with a toric pair $(X,B)$ and a codimension one point $z\in Z$. Suppose there is an $\R$-divisor $B^+$ (not necessary toric) such that $B^+\geq B$, $K_X+B^+\sim_{\mathbb R} 0/Z$ and $\mld(X/Z\ni z, B^+)\geq \epsilon$. Then $(X,B+\delta f^*\overline{z})$ is lc over some neighborhood of $z\in Z$.
\end{thm}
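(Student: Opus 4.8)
The plan is to deduce Theorem~\ref{thm3} from Theorem~\ref{thmtech}: one reads off, from the generalized $\delta$-lc property of the base, a bound on the coefficient of $\overline z$ in the discriminant divisor, translates this via the definition of the discriminant into an lc threshold statement on $X$, and finally passes from $\Gamma^{\alpha}$ back to $B$.

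First I would apply Theorem~\ref{thmtech} with $B^+$ playing the role of $B$; this is legitimate since $B^+\ge 0$, $K_X+B^+\sim_{\R}0/Z$ and $\mld(X/Z\ni z,B^+)\ge\epsilon$. Write $\alpha=1/r!$ and $\Gamma^{\alpha}=\alpha B^+ +(1-\alpha)\Delta$, where $\Delta$ is the toric boundary of $X$, and let $(Z,\Gamma^{\alpha}_{Z},\bN^{\alpha})$ be the generalized pair given by adjunction for $f\colon(X,\Gamma^{\alpha})\to Z$. Theorem~\ref{thmtech} then gives $\mld(Z\ni z,\Gamma^{\alpha}_{Z},\bN^{\alpha})\ge\delta$. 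Since $z$ has codimension one, $\overline z$ is a prime divisor on $Z$ with generic point $z$, and the generalized log discrepancy of $\overline z$ with respect to $(Z,\Gamma^{\alpha}_{Z},\bN^{\alpha})$ is $1-\mult_{\overline z}(\Gamma^{\alpha}_{Z})$, so $\mult_{\overline z}(\Gamma^{\alpha}_{Z})\le 1-\delta$. By the definition of the discriminant divisor, $\mult_{\overline z}(\Gamma^{\alpha}_{Z})=1-t$, where $t$ is the largest real number such that $(X,\Gamma^{\alpha}+t\,f^*\overline z)$ is lc over the generic point of $\overline z$; hence $t\ge\delta$ and $(X,\Gamma^{\alpha}+\delta f^*\overline z)$ is lc over the generic point $z$ of $\overline z$. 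As $f$ is projective, the image in $Z$ of the (closed) non-lc locus of this pair is closed and does not contain $z$, so its complement $U$ is an open neighborhood of $z$ over which $(X,\Gamma^{\alpha}+\delta f^*\overline z)$ is lc.

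Finally I would compare $B$ with $\Gamma^{\alpha}$. Since $(X,B)$ is a toric pair its boundary coefficients are at most one, so $B\le\Delta$; together with $B\le B^+$ and $0\le\alpha\le 1$ this yields $B=\alpha B+(1-\alpha)B\le\alpha B^+ +(1-\alpha)\Delta=\Gamma^{\alpha}$, whence $B+\delta f^*\overline z\le\Gamma^{\alpha}+\delta f^*\overline z$. Since lc-ness is preserved when the boundary is decreased, $(X,B+\delta f^*\overline z)$ is lc over $U$, as required.

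The essential difficulty is entirely contained in Theorem~\ref{thmtech}, which supplies the explicit $\delta$ through the averaging trick; in the argument above the only delicate point is that the averaging parameter $\alpha=1/r!$ is the one forced on us by Theorem~\ref{thmtech} and must still be compatible with the comparison $B\le\Gamma^{\alpha}$. This is exactly where the hypothesis that $(X,B)$ is a toric pair — so that $B\le\Delta$ — is used; without some such hypothesis the conclusion would already fail at the level of the boundary coefficients near $z$.
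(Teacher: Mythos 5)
Your proof follows essentially the same route as the paper's: apply Theorem~\ref{thmtech} to $(X,B^+)$, read off the coefficient of $D=\overline z$ in the discriminant, translate to an lc threshold on $X$, and then compare $B$ with $\Gamma^{\alpha}=\alpha B^+ +(1-\alpha)\Delta$. You also correctly identify the role of the toric hypothesis in making $B\le\Delta$, which is the crux of the reduction.

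There is one small gap. You assert that ``since $(X,B)$ is a toric pair its boundary coefficients are at most one, so $B\le\Delta$.'' Under this paper's conventions a pair only requires $B\ge 0$ (see the definition in \S 2), so this claim is not automatic. The paper closes the gap by first invoking Lemma~\ref{lalala}: since $\mld(X/Z\ni z,B^+)\ge\epsilon>0$, the pair $(X,B^+)$ --- and hence $(X,B)$, as $B\le B^+$ --- is lc over a neighborhood of $z$, so after shrinking $Z$ around $z$ one may assume $(X,B)$ is a toric \emph{lc} pair, from which $B\le\Delta$ does follow (a toric prime with coefficient $>1$ in $B$ would have negative log discrepancy). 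The rest of your argument is fine; in particular, you are more explicit than the paper in passing from ``lc over the generic point of $D$'' to ``lc over a neighborhood of $z$,'' which is a harmless and correct elaboration. Insert the shrinking step and the proof is complete.
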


\begin{rem}\label{remA}
After this work was completed, Ambro informed me that he also got some explicit lower bounds in the toric case.  Let $f:X \to Z$ be a toric Fano contraction of relative dimension $r$ with $X$ being $\epsilon$-lc. Let $F$ be the general fiber and let $\gamma$ be the $\alpha$-invariant of $F$. There exists a sharp lower bound for $\gamma$ just in terms of $r$ and $\epsilon$ (cf. \cite{A16}). Ambro got explicit bounds for $\delta$ in terms of $\epsilon,r$ and $\gamma$ in Theorem \ref{thm1}, \ref{thm2} and \ref{thm3}. 
\end{rem}

\subsection*{Idea of the proof of Theorem \ref{thmtech}}
The proof is built on ideas from \cite{BC21} with some modifications. In \cite{BC21}, by running toric minimal model program, they reduced the problem to the case for toric Mori fiber spaces. Then they showed that after taking a finite cover and extracting a toric divisor, a $\Q$-factorial toric Mori fiber space can be factored as the composition of toric contractions of smaller relative dimension. Therefore they can reduce the problem to the case for contractions of relative dimension one. However, after taking a finite cover and extracting a divisor, the pullback of $K_X+B$ may be a sub-pair rather than a pair, so it is necessary to take average $\Gamma^{\alpha}=\alpha B+(1-\alpha)\Delta$ with the toric boundary to make its pullback a pair. To guarantee that the singularities of $(X,\Gamma^{\alpha})$ are not too bad, $\alpha$ can not be too small and hence it is important to control the order $n$ of the finite cover and the log discrepancy of the extracted divisor. They showed the boundedness of the order $n$, however, it seems not easy to give an explicit bound for $n$, as it involves all possibilities of the fans corresponding to 
$\epsilon$-lc toric Fano varieties 
up to the action of $GL_r(\Z)$. In this paper, we make some modifications to their method. We factor a toric Mori fiber space after extracting a toric divisor with log discrepancy $\leq r$, without taking a finite cover (see Lemma \ref{diagram}). Recall that in relative dimension one, an explicit value for $\delta$ in Shokurov conjecture was given in \cite{Ch22}.  Combining this result we obtain an explicit value for $\delta$ in \cite[Theorem 1.4]{BC21}. 


\subsection*{Acknowledgements} I would like to thank Caucher Birkar for his valuable comments and constant support. I would also like to thank Yifei Chen and Yu Zou for their helpful comments. I am grateful to Florin Ambro for sharing with me his result (Remark \ref{remA}) and for a lot of useful discussions. 

\section{Preliminaries}


We will freely use the standard notations and definitions in \cite{KM98,BCHM10}. A contraction $f:X\rightarrow Z$ is a projective morphism of varieties with $f_*\mathcal{O}_X=\mathcal{O}_Z$. An extremal contraction is a contraction $f:X\rightarrow Z$  with the relative Picard number $\rho(X/Z)=1$.

\subsection{Fano type varieties} Let $X\to Z$ be a contraction of normal varieties. We say $X$ is of Fano type over $Z$ if there is a klt pair $(X,B)$ on $X$ such that $-(K_X+B)$ is ample over $Z$.

We say $X\to Z$ is a Mori fiber space if $-K_X$ is ample over $Z$ and the relative Picard number $\rho(X/Z)=1$.

\subsection{\emph{b}-divisors}
Let $X$ be a normal variety. A $\bb$-divisor $\textbf{D}$ over $X$ is a collection of $\R$-divisors $\textbf{D}_Y$ for each birational model $Y$ over $X$, such that $\sigma_*\textbf{D}_{Y_1}=\textbf{D}_{Y_2}$ for any birational morphism $\sigma:Y_1\rightarrow Y_2/X$.  

Let $\bD$ be a $\bb$-divisor over $X$ and $Y_0$ be a birational model over $X$. We say $\bD$ descends on $Y_0$ if $\textbf{D}_{Y_0}$ is an $\R$-Cartier $\R$-divisor and $\textbf{D}_{Y}=\sigma^* \textbf{D}_{Y_0}$ for any birational morphism $\sigma:Y\rightarrow Y_0/X$.

Let $X\rightarrow U$ be a projective morphism. We say that a $\bb$-divisor $\textbf{D}$ over $X$ is $\bb$-nef$/U$ (resp. $\bb$-semiample$/U$) if $\bD$ descends on some birational model $Y_0$ and $\textbf{D}_{Y_0}$ is nef$/U$ (resp. semiample$/U$). 

We denote by $\textbf{0}$ the $\bb$-divisor $\bD$ such that $\textbf{D}_Y=0$ for each birational model $Y$ over $X$.

\subsection{Generalized pairs}
We will follow the original definitions in \cite{BZ16} and adopt the notations in \cite{HL21}. 

\begin{defn} 
A generalized sub-pair (g-sub-pair for short) $(X,B,\bM)/U$ consists of a normal variety $X$ associated with a projective morphism $X\rightarrow U$, an $\R$-divisor $B$ on $X$, and a  $\bb$-nef$/U$ $\bb$-divisor $\bM$ over $X$.

A g-sub-pair $(X,B,\bM)/U$ is called a sub-pair if $\bM=\textbf{0}$. In this case we denote it by $(X,B)/U$ or $(X,B)$.

A g-sub-pair $(X,B,\bM)/U$ is called a generalized pair (g-pair for short) if $B\geq 0$. A sub-pair $(X,B)$ is called a pair if $B\geq 0$.

We may drop $U$ when we emphasize the structures of $(X,B,\bM)$ that are independent of the choice of $U$, for example, the singularities of $(X,B,\bM)$.
\end{defn}

\begin{defn} \label{sing}
Let $(X,B,\bM)/U$ be a g-(sub-)pair and $E$ be a prime divisor over $X$, i.e. a prime divisor on a normal variety $Y$ with a birational morphism $\pi:Y\rightarrow X$. The center of $E$ on $X$ is defined as the image of $E$ on $X$ under the morphism $\pi$ and it is denoted by $\cent_X E$. Write 
$$K_Y+B_Y+\bM_Y:=\pi^*(K_X+B+\bM_X).$$
Then the log discrepancy of $E$ with respect to $(X,B,\bM)$ is defined as $1-\mult_E B_Y$ and it is denoted by $a(E,X,B,\bM)$, where $\mult_E B_Y$ is the coefficient of $E$ in $B_Y$.
\end{defn}
\begin{defn}\label{mld}
Let $(X,B,\bM)/U$ be a g-(sub-)-pair, $f:X\rightarrow Z/U$ be a projective morphism and $z\in Z$ be a (not necessary closed) point. The minimal log discrepancy of $(X,B,\bM)$ over $z$ is defined as
\begin{align*}
\mld(X/Z\ni z,B,\bM):=\inf\{a(E,X,B,\bM) \mid ~ &\text{$E$ is a prime divisor over $X$}\\
&\text{with $f(\cent_X(E))=\overline{z}$}\}.
\end{align*}
In the case that $Z=X$, $z=x$ and $f$ is the identity morphism, we will use  $\mld(X\ni x,B,\bM)$ instead of $\mld(X/Z\ni z,B,\bM)$.
\end{defn}

\begin{defn}
A g-(sub-)pair $(X,B,\bM)$ is said to be (sub-)$\epsilon$-glc (resp. (sub-)$\epsilon$-gklt, (sub-)glc, (sub-)gklt) if $\mld(X\ni x,B,\bM)\geq \epsilon$ (resp. $>\epsilon$,~$\geq 0$,~$> 0$) for any codimension $\geq 1$ point $x\in X$. 

If $\bM=0$ and $(X,B,\bM)$ is (sub-)$\epsilon$-glc (resp. (sub-)$\epsilon$-gklt, (sub-)glc, (sub-)gklt), we say that $(X,B)$ is (sub-)$\epsilon$-lc (resp. (sub-)$\epsilon$-klt, (sub-)lc, (sub-)klt). In the case when $B=0$, we also say $X$ is $\epsilon$-lc (resp. $\epsilon$-klt, lc, klt). 
\end{defn}

\begin{defn}
Let $(X,B,\bM)/U$ be a g-(sub-)pair and $D$ be an effective $\R$-Cartier $\R$-divisor on $X$. The lc threshold of  $D$ with respect to  $(X,B,\bM)$ is defined to be
$$\lct(X,B,\bM;D):=\sup\{t\geq 0\mid (X,B+tD,\bM) \text{ is (sub-)glc}\}.$$
\end{defn}

\begin{defn}\label{bettersing}
Let $(X,B,\bM)/U$ and $(X,\Gamma,\bN)/U$ be two g-(sub-)pairs. We say $(X,B,\bM)$ has better singularities than $(X,\Gamma,\bN)$ if 
$$a(E,X,B,\bM)\geq a(E,X,\Gamma,\bN)$$ for any prime divisor $E$ over $X$.
\end{defn}

\begin{lem}\label{lalala}
Let $(X,B,\bM)/U$ be a g-(sub-)-pair, $f:X\rightarrow Z/U$ be a projective morphism and $z\in Z$ be a (not necessary closed) point.
Then $\mld(X/Z\ni z,B,\bM)\geq 0$ if and only if $(X,B,\bM)$ is (sub-)glc over some neighborhood of $z\in Z$.
\end{lem}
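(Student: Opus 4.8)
I would prove the equivalence by establishing the two implications separately.

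\emph{The easy implication} ``$(X,B,\bM)$ is (sub-)glc over a neighbourhood of $z$ $\Rightarrow$ $\mld(X/Z\ni z,B,\bM)\geq 0$'' is immediate: if $U\ni z$ is open with $(f^{-1}(U),B|_{f^{-1}(U)},\bM|_{f^{-1}(U)})$ (sub-)glc, then for any prime divisor $E$ over $X$ with $f(\cent_X E)=\overline z$ the center $\cent_X E$ meets $f^{-1}(U)$ (as $z\in U$), so $a(E,X,B,\bM)=a(E,f^{-1}(U),B|_{f^{-1}(U)},\bM|_{f^{-1}(U)})\geq 0$ because log discrepancies are computed locally near the center; taking the infimum over such $E$ (Definition \ref{mld}) gives $\mld(X/Z\ni z,B,\bM)\geq 0$. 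From now on I treat the g-sub-pair case, which contains the g-pair case.

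For the converse I would argue by contrapositive: assuming $(X,B,\bM)$ is not (sub-)glc over any neighbourhood of $z$, I want a prime divisor $E$ over $X$ with $f(\cent_X E)=\overline z$ and $a(E,X,B,\bM)<0$. First I reduce to an SNC model. Choose a log resolution $\pi\colon Y\to X$ of $(X,B)$ on which $\bM$ descends, and write $K_Y+B_Y+\bM_Y=\pi^*(K_X+B+\bM_X)$. For every prime divisor $E'$ over $Y$ one has $a(E',X,B,\bM)=a(E',Y,B_Y,\bM_Y)=a(E',Y,B_Y)$, the second equality because $\bM$ pulls back trivially above the model on which it descends; moreover $f(\cent_X E')=g(\cent_Y E')$ with $g:=f\circ\pi$, and since $\pi$ and $g$ are proper, images of centers are closed. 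Hence both the hypothesis and the conclusion are unchanged if $(X,B,\bM)/Z$ is replaced by the sub-pair $(Y,B_Y)/Z$. Now ``not (sub-)glc over a neighbourhood of $z$'' translates to: there is a prime divisor $E_0$ over $Y$ with $a(E_0,Y,B_Y)<0$ and $z\in g(\cent_Y E_0)$; since $Y$ is smooth and $\Supp B_Y$ is SNC, $\cent_Y E_0$ lies in some component $E_{i_0}$ of $B_Y$ with $c:=\mult_{E_{i_0}}B_Y>1$, so $z\in g(E_{i_0})$.

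The core of the argument is to move the center of a negative-discrepancy divisor down onto $\overline z$. If $g(E_{i_0})=\overline z$, take $E=E_{i_0}$: it has $a(E_{i_0},Y,B_Y)=1-c<0$ and $g(\cent_Y E_{i_0})=g(E_{i_0})=\overline z$. Otherwise $\overline z\subsetneq g(E_{i_0})$, and as $g|_{E_{i_0}}\colon E_{i_0}\to g(E_{i_0})$ is surjective with $\overline z$ irreducible, the closed set $E_{i_0}\cap g^{-1}(\overline z)$ has an irreducible component $S$ with $g(S)=\overline z$; moreover $\codim_Y S\geq 2$, since $\codim_Y S=1$ would force $S=E_{i_0}$, contradicting $g(E_{i_0})\neq\overline z$. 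Put $d=\codim_Y S$, and choose a regular system of parameters $x_1,\dots,x_d$ of $\mathcal O_{Y,\eta_S}$ adapted to $B_Y$ (every component of $B_Y$ through $\eta_S$ is a coordinate hyperplane), with $x_1$ a local equation of $E_{i_0}$. Let $E_N$ be the divisor over $Y$ realizing the monomial valuation with weights $(N,1,\dots,1)$ in $x_1,\dots,x_d$, whose center on $Y$ is $S$. Then $a(E_N,Y,0)=N+d-1$ and $\mult_{E_N}E_{i_0}=N$, while $\mult_{E_N}(B_Y-cE_{i_0})$ does not depend on $N$, so $a(E_N,Y,B_Y)=(d-1)-(c-1)N+O(1)\to-\infty$ as $N\to\infty$. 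Fixing $N$ large enough gives $a(E_N,Y,B_Y)<0$; since $\cent_Y E_N=S$ we get $\cent_X E_N=\pi(S)$ and $f(\cent_X E_N)=g(S)=\overline z$, as required.

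I expect the only genuinely non-formal step to be this last one — the construction of $E_N$. It rests on the fact that a boundary coefficient $>1$ forces non-sub-lc behaviour over \emph{every} subvariety of the offending component, so that the bad singularity propagates down onto $\overline z$ even when $E_{i_0}$ dominates a strictly larger subvariety of $Z$. A plain blow-up of $S$ need not have negative discrepancy once $\codim_Y S$ is large, which is exactly why a weighted blow-up with an unbounded weight along $E_{i_0}$ is needed. Everything else — locality of log discrepancies, properness of $\pi$ and $g$ (so that images of centers are closed, and ``(sub-)glc over a neighbourhood'' is detected by finitely many components on the SNC model), and the triviality of the moduli part above a model on which $\bM$ descends — is routine bookkeeping.
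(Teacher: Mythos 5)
Your proof is correct and takes essentially the same approach as the paper's: both pass to a log resolution on which $\bM$ descends, locate a boundary component of coefficient $>1$ that is visible over $z$, and then manufacture a divisorial valuation of arbitrarily negative log discrepancy whose center dominates $\overline{z}$. The only difference is mechanical: the paper arranges $\overline{\pi^{-1}f^{-1}(z)}$ to be a divisor $F$ and iterates blow-ups of $D\cap E$ to drive the exceptional coefficient to $+\infty$, whereas you produce the same family of monomial valuations in one step via a weighted blow-up with weight $N\to\infty$ along the offending component.
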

\begin{proof}
This is essentially \cite[Lemma 2.8]{HJL22} where it was stated only for $\bM=0$. By definition, the ``if'' part is obvious. Next we show the ``only if'' part. 

Assume the contrary that $\mld(X/Z\ni z,B,\bM)\geq 0$ but $(X,B,\bM)$ is not (sub-)glc over any neighborhood of $z\in Z$. Then there is a prime divisor $E$ over $X$ such that $z\in f(\cent_X E)$ and $a(E,X,B,\bM)< 0$.
Let $\pi:Y\rightarrow X$ be a resolution with 
$K_Y+B_Y+\bM_Y=\pi^*(K_X+B+\bM_X)$
such that 
\begin{itemize}
	\item $\bM$ descends on $Y$,
	\item $E$ is a prime divisor on $Y$,
	\item $\overline{\pi^{-1}f^{-1}(z)}$ is a divisor on $Y$, say $F$, and 
	\item $E+F$ is a simple normal crossing divisor on $Y$. 
\end{itemize}
We can find an irreducible component $D$ of $F$ such that $f(\pi(D\cap E))=\overline{z}$ (indeed, since $z\in f(\pi(E))$, there is a point $\eta\in E$ such that $f(\pi(\eta))=z$, then we take $D$ to be a component of $F$ which contains $\eta$). 

Denote $d=\mult_D B_Y$ and $e=\mult_E B_Y>1$. Blowing up $D\cap E$, we get a new resolution $\pi':Y'\rightarrow X$ with $K_{Y'}+B_{Y'}+\bM_{Y'}=\pi'^*(K_X+B+\bM_X)$. Denote by $D'$ the exceptional$/Y$ divisor on $Y'$ and by $E'$ the birational transformation of $E$ on $Y'$. By construction, we have $f(\pi'(D'))=\overline{z}$, $D'$ meets $E'$ transversely, $f(\pi'(D'\cap E'))=\overline{z}$ and $\mult_{D'} B_{Y'}\geq d+e-1>d$.

So, by successively blowing up, we eventually obtain a prime divisor $\widetilde{D}$ over $X$ such that $f(\cent_X \widetilde{D})=\overline{z}$ and $a(\widetilde{D},X,B,\bM)<0$, which contradicts that $\mld(X/Z\ni z,B,\bM)\geq 0$.
\end{proof}


\subsection{Adjunction for generalized fibrations}\label{adjunction}

\begin{defn}
Let $f:X\rightarrow Z$ be a contraction between normal varieties over $U$ with $\dim Z>0$. Let $(X,B,\bM)/U$ be a g-pair which is glc over the generic point of $Z$ and such that $K_X+B+\bM_X\sim_{\R} 0/Z$. Then  $K_X+B+\bM_X\sim_{\R} f^* L$ for some $\R$-Cartier $\R$-divisor $L$ on $Z$. 

For any prime divisor $D$ on $Z$, let $t_D$ be the lc threshold of $f^*D$ with respect to $(X,B,\bM)$ over the generic point of $D$. This make sense even if $D$ is not $\Q$-Cartier because we only need the pullback of $D$ over the generic point of $D$ where $Z$ is smooth.
We set $B_Z=\sum_D (1-t_D)D$ where $D$ runs over all prime divisors on $Z$
and set $M_Z=L-K_Z-B_Z$. 
The former is called the discriminant divisor and the latter is called the moduli divisor.

Let $\sigma: Z'\rightarrow Z$ be a birational morphism from a normal variety $Z'$ and let $X'$ be the resolution of the main component of $X\times_Z Z'$ with induced morphism $\tau:X'\rightarrow X$ and $f':X'\rightarrow Z'$. Write $K_{X'}+B'+\bM_{X'}=\tau^*(K_X+B+\bM_X)$, then $K_{X'}+B'+\bM_{X'}\sim_{\R} f'^* \sigma^* L$. Similarly we can define the discriminant divisor $B_{Z'}$ and the moduli divisor $M_{Z'}$ for the contraction $(X',B',\bM)\rightarrow Z'$. One can check that $\sigma_* B_{Z'}=B_Z$ and
$\sigma_* M_{Z'}=M_Z$. Hence there exist $\bb$-divisors $\bB^Z,\textbf{M}^Z$ such that $\bB^Z_{Z'}=B_{Z'}$ and $\textbf{M}^Z_{Z'}=M_{Z'}$ for any birational model $Z'$ over $Z$, which are called the discriminant b-divisor and the moduli b-divisor respectively. By construction, we have
$$K_X+B+\bM_X\sim_{\R} f^*(K_Z+B_Z+\bM^Z_Z).$$

It was shown that $\bM^Z$ is a $\bb$-nef$/U$ $\bb$-divisor over $Z$ (see \cite[Theorem 11.4.4]{CHLX23}). Hence we can regard $(Z,B_Z,\bM^Z)/U$ as a g-pair.  We call $(Z,B_Z,\bM^Z)/U$ the g-pair given by adjunction for $f:(X,B,\bM)\rightarrow Z$. In the case that $(X,B,\bM)$ is glc, $(Z,B_Z,\bM^Z)$ is also a glc g-pair.

For more details about adjunction for generalized fibrations, we refer the readers to \cite{Fi20}, \cite{JLX22} and \cite[\S 11.4]{CHLX23}.

\end{defn}

\begin{lem}\label{trans}\cite[Lemma 2.1]{BC21}
Assume that
\begin{itemize}
  \item $(X,B,\bM)/U$ is a g-pair which is glc over the generic point of $Z$,
  \item $X \xrightarrow{g} Y\xrightarrow{h} Z$ are contractions of normal varieties$/U$ with $\dim Z>0$, and
  \item $K_X+B+\bM_X \sim_{\mathbb{R}}0/Z$.
\end{itemize}
Let $(Y,B_Y,\bM^Y)/U$ be the g-pair given by adjunction for $g:(X,B,\bM)\rightarrow Y$ and let $(Z,B_Z,\bM^Z)/U$ be the g-pair given by adjunction for $h\circ g:(X,B,\bM)\rightarrow Z$. Then $(Z,B_Z,\bM^Z)/U$ is also the g-pair given by adjunction for $h:(Y,B_Y,\bM^Y)\rightarrow Z$.
\end{lem}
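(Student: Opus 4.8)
The plan is to verify the three pieces of data that define the generalized pair given by adjunction for $h\colon (Y,B_Y,\bM^Y)\rightarrow Z$ — namely the chosen $\R$-linear equivalence class on $Z$, the discriminant divisor $B_Z$, and the moduli $\bb$-divisor $\bM^Z$ — agree with those coming from $h\circ g\colon (X,B,\bM)\rightarrow Z$. First I would record that $(Y,B_Y,\bM^Y)$ is glc over the generic point of $Z$: indeed $(X,B,\bM)$ is glc over the generic point of $Z$, hence over the generic point of $Y$, so adjunction for $g$ produces a glc g-pair, and moreover $K_Y+B_Y+\bM^Y_Y\sim_{\R} 0/Z$ because $K_X+B+\bM_X\sim_{\R} g^*(K_Y+B_Y+\bM^Y_Y)$ and $K_X+B+\bM_X\sim_{\R} 0/Z$, so that $g^*$ of the left side is $\R$-linearly trivial over $Z$ and $g_*\mathcal O_X=\mathcal O_Y$ lets us descend this triviality to $Y$. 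Thus the adjunction construction for $h$ is legitimate, and if $K_X+B+\bM_X\sim_{\R}(h\circ g)^*L$ then also $K_Y+B_Y+\bM^Y_Y\sim_{\R} h^*L$ with the \emph{same} $L$, so the ambient $\R$-divisor class on $Z$ matches.

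The core of the argument is the identity $B_Z = B_Z^{(h)}$, where the left side is the discriminant from $h\circ g$ and the right side from $h$. Fix a prime divisor $D$ on $Z$; we may work over the generic point of $D$, where $Z$ is smooth and $D$ is Cartier. By definition the coefficient of $D$ in $B_Z$ is $1-t$ with $t=\lct$ of $(h\circ g)^*D=g^*h^*D$ with respect to $(X,B,\bM)$ over the generic point of $D$, while the coefficient in $B_Z^{(h)}$ is $1-s$ with $s=\lct$ of $h^*D$ with respect to $(Y,B_Y,\bM^Y)$ over the generic point of $D$. The key point is that for every $t'\ge 0$, the g-pair $(X,B+t'g^*h^*D,\bM)$ is glc over the generic point of $D$ if and only if $(Y,B_Y+t'h^*D,\bM^Y)$ is glc over the generic point of $D$: this is exactly the statement that adjunction for the contraction $g$ is compatible with adding the pullback of the $\Q$-Cartier divisor $t'h^*D$ from $Y$, i.e.\ $B_Y + t'h^*D$ is the discriminant part of $(X,B+t'g^*h^*D,\bM)\to Y$ and the moduli part is unchanged (since $t'g^*h^*D$ is pulled back from $Y$), combined with the fact that a g-pair on $Y$ is glc over a point precisely when the g-pair obtained by adjunction for a fibration from $X$ with $-$trivial relative class is glc there (one direction is pullback, the other is that lc discrepancies can only drop under $g^*$, together with the canonical bundle formula controlling the generic fiber). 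Hence $t=s$, giving $B_Z=B_Z^{(h)}$.

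Finally, the moduli $\bb$-divisors agree: by construction $\bM^Z$ (from $h\circ g$) is determined on each birational model $Z'$ over $Z$ by $M_{Z'} = \sigma^*L - K_{Z'} - B_{Z'}$, and likewise the moduli $\bb$-divisor from $h$ is given by the \emph{same} formula once we know the ambient class $L$ and the discriminant agree on every $Z'$; but applying the previous two paragraphs to the base change $X'\to Y'\to Z'$ over an arbitrary $Z'$ (the fibered product construction is compatible, since a resolution of the main component of $X\times_Z Z'$ maps to a model of $Y\times_Z Z'$) shows the discriminant b-divisors coincide and the ambient classes coincide on every model, so the moduli b-divisors coincide. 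I expect the main obstacle to be the compatibility statement in the previous paragraph — carefully checking that lc thresholds computed ``upstairs'' on $X$ over the generic point of $D$ equal those computed on $Y$, which amounts to the standard but slightly technical fact that the discriminant of a composition is read off correctly on the intermediate base, uniformly in the auxiliary coefficient $t'$; everything else is bookkeeping with pushforwards of $\bb$-divisors.
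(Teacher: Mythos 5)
The paper does not prove Lemma~\ref{trans} itself; it cites \cite[Lemma 2.1]{BC21}, so there is no internal argument to compare against. Your outline --- check that $(Y,B_Y,\bM^Y)$ is a legitimate input for adjunction over $Z$ with the same ambient class $L$; show the discriminants agree divisor by divisor via lc thresholds; then deduce agreement of the moduli $\bb$-divisors from $M_{Z'}=\sigma^*L-K_{Z'}-B_{Z'}$ on every model $Z'$ --- is the expected route, and the reduction of $B_Z=B^{(h)}_Z$ to the claim that for each $t'\ge 0$ the g-pairs $(X,B+t'g^*h^*D,\bM)$ and $(Y,B_Y+t'h^*D,\bM^Y)$ are simultaneously sub-glc over the generic point of $D$ is the right move, as is the observation that the moduli part of $g$-adjunction does not change when one adds a pullback from $Y$.

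The load-bearing step, which you flag as the main obstacle, is precisely this equivalence, and the parenthetical justification you offer --- ``one direction is pullback, the other is that lc discrepancies can only drop under $g^*$, together with the canonical bundle formula controlling the generic fiber'' --- does not prove it; ``lc discrepancies can only drop under $g^*$'' is not a meaningful statement for a non-birational contraction. The direction (total space sub-glc over $\eta_D$) $\Rightarrow$ (base sub-glc over $\eta_D$) is immediate from the definition of the discriminant (each coefficient is $1-t_E$ with $t_E\ge 0$); the converse is the substantive inverse-of-adjunction statement for (generalized) lc-trivial fibrations, and it needs either a proper citation (\cite{A05,Fi20}, or \cite[Lemma 2.8]{HJL22} together with the definition of the discriminant $\bb$-divisor) or an explicit blow-up argument of the type used in Lemma~\ref{lalala}: if some divisor over $X$ with image containing $\eta_D$ has negative log discrepancy, produce a divisor over $Y$, still lying over $\eta_D$, whose discriminant coefficient exceeds~$1$. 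A secondary point to spell out is crepant invariance of adjunction under passing to a model $X'\to Y'\to Z'$, which you use implicitly when identifying the discriminant $\bb$-divisors on all $Z'$; again standard, but it should be stated rather than tucked into a parenthesis.
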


\begin{lem}\label{betterbase}
Let $f:X\rightarrow Z$ be a contraction between normal varieties over $U$. Let $(X,B,\bM)/U$ and $(X,\Gamma,\bN)/U$ be two g-pairs on $X$ which are glc over the generic point of $Z$. Assume that $K_X+B+\bM_X\sim_{\R } 0/Z$ and $K_X+\Gamma+\bN_X\sim_{\R } 0/Z$. Let $(Z,B_Z,\bM^Z)/U$ and $(Z,\Gamma_Z,\bN^Z)/U$ be the g-pairs given by adjunction for $(X,B,\bM)$ and $(X,\Gamma,\bN)$ over $Z$ respectively. Suppose that $(X,B,\bM)$ has better singularities than $(X,\Gamma,\bN)$, then $(Z,B_Z,\bM^Z)$ has better singularities than $(Z,\Gamma_Z,\bN^Z)$ (see Definition \ref{bettersing} for ``better singularities'').
\end{lem}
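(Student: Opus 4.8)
\textbf{Proof proposal for Lemma \ref{betterbase}.}

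The plan is to compare the two adjunction constructions divisor-by-divisor on $Z$, and then to promote the ``better singularities'' comparison from $Z$ itself to all divisors over $Z$ by an argument on higher birational models. First I would fix a prime divisor $D$ on $Z$ and compare the coefficients $\mult_D B_Z = 1-t_D$ and $\mult_D \Gamma_Z = 1-s_D$, where $t_D = \lct(X,B,\bM;f^*D)$ and $s_D = \lct(X,\Gamma,\bN;f^*D)$, both taken over the generic point of $D$. Since $(X,B,\bM)$ has better singularities than $(X,\Gamma,\bN)$, for every prime divisor $E$ over $X$ we have $a(E,X,B,\bM) \ge a(E,X,\Gamma,\bN)$; adding $t\,f^*D$ to both boundaries shifts these log discrepancies by the same amount $-t\cdot\mult_E(f^*D)_Y$, so $(X,B+t f^*D,\bM)$ is glc over the generic point of $D$ whenever $(X,\Gamma+t f^*D,\bN)$ is. Hence $t_D \ge s_D$, i.e. $\mult_D B_Z \le \mult_D \Gamma_Z$. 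This shows $\Gamma_Z \ge B_Z$.

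Next I would handle the moduli parts. From $K_X+B+\bM_X \sim_\R f^*(K_Z+B_Z+\bM^Z_Z)$ and $K_X+\Gamma+\bN_X \sim_\R f^*(K_Z+\Gamma_Z+\bN^Z_Z)$, subtracting gives $f^*\big((\Gamma_Z-B_Z) + (\bN^Z_Z - \bM^Z_Z)\big) \sim_\R (\Gamma - B) + (\bN_X - \bM_X)$. This identity, together with the corresponding identities on every birational model $Z'$ (using $X'$ as in the definition of adjunction), pins down the $\bb$-divisor $\bN^Z - \bM^Z$ in terms of data upstairs. The point I want is the comparison of log discrepancies on $Z$: for a prime divisor $P$ over $Z$, realized on a model $Z'$, one has $a(P,Z,B_Z,\bM^Z) = a(P', X', B', \bM)$ for a suitable divisor $P'$ over $X'$ lying over (the generic point of) $P$ — this is the standard fact that adjunction preserves the relevant log discrepancy over the base, coming from $K_{X'}+B'+\bM_{X'} \sim_\R f'^*(K_{Z'}+B_{Z'}+\bM^Z_{Z'})$. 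The same holds for $\Gamma$, and since $(X',B',\bM)$ has better singularities than $(X',\Gamma',\bN)$ (the ``better singularities'' relation is preserved under pulling back to $X'$, as both sides are defined via all divisors over $X$, equivalently over $X'$), we get $a(P,Z,B_Z,\bM^Z) \ge a(P,Z,\Gamma_Z,\bN^Z)$ for every such $P$.

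The step I expect to be the main obstacle is making the identification $a(P,Z,B_Z,\bM^Z) = a(P',X',B',\bM)$ precise and uniform in $P$: one must choose the birational model $Z'$ (and the associated $X'$) on which $P$ appears as a divisor, ensure the moduli $\bb$-divisors $\bM^Z,\bN^Z$ both descend there (pass to a common higher model if necessary), and check that the coefficient of $P$ in $B_{Z'}$ really equals $1 - \lct$ computed as an honest lc threshold at the generic point of $P$ where everything in sight is $\R$-Cartier — this is exactly the generic-smoothness trick invoked in the definition of the discriminant divisor, so it applies, but it needs to be spelled out. Once that is in hand, the inequality $\Gamma_Z \ge B_Z$ from the first paragraph is really the $P = D$ (divisor-on-$Z$) case of the general comparison, and the conclusion that $(Z,B_Z,\bM^Z)$ has better singularities than $(Z,\Gamma_Z,\bN^Z)$ follows directly from Definition \ref{bettersing}.
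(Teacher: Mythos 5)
Your first paragraph is essentially the paper's proof: fix a prime divisor $D$, compare the lc thresholds $t_D$ and $s_D$ of $f^*D$ by noting that adding $tf^*D$ to both boundaries shifts all log discrepancies by the same amount, so ``better singularities'' is preserved and hence $t_D\geq s_D$. The only thing missing there is that you restrict to $D$ on $Z$ itself. To get the full ``better singularities'' comparison on $Z$ you must run exactly the same threshold argument for a prime divisor $D$ on an arbitrary high resolution $Z'\to Z$, pulling $K_X+B+\bM_X$ and $K_X+\Gamma+\bN_X$ back to a resolution $X'$ on which $f'\colon X'\to Z'$ is a morphism and comparing $\lct(X',B',\bM;f'^*D)$ with $\lct(X',\Gamma',\bN;f'^*D)$ at the generic point of $D$ (the ``better singularities'' relation is stable under crepant pullback to $X'$). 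Since the adjunction construction defines the discriminant $\bb$-divisor by precisely these thresholds on every model $Z'$, this gives $a(D,Z,B_Z,\bM^Z)=t_D\geq s_D=a(D,Z,\Gamma_Z,\bN^Z)$ and you are done.

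What should be removed from the second paragraph is the claimed identity $a(P,Z,B_Z,\bM^Z)=a(P',X',B',\bM)$ for a ``suitable'' $P'$ over $X'$. That is not correct in general: the correct identity is $a(P,Z,B_Z,\bM^Z)=\lct(X',B',\bM;f'^*P)$ at the generic point of $P$, and if $P'$ computes that threshold then $a(P',X',B',\bM)=t_P\cdot\mult_{P'}(f'^*P)$ on the relevant resolution, which equals $t_P$ only when the multiplicity is $1$. Your attempt to pin down $\bN^Z-\bM^Z$ is likewise unnecessary: once you pass to $Z'$ and use the descent of $\bM^Z,\bN^Z$, the log discrepancy comparison reduces to a coefficient comparison of the discriminant divisors $B_{Z'}$ and $\Gamma_{Z'}$, which is the threshold comparison from the first paragraph. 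So delete the flawed identification and the moduli-divisor bookkeeping, and simply run the first paragraph's argument for $D$ on an arbitrary $Z'$; that is the paper's proof.
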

\begin{proof}
Let $D$ be a prime divisor on some high resolution $Z'\rightarrow Z$ . Let $\pi:X'\rightarrow X$ be a high enough resolution such that the induced map $f':X'\dashrightarrow Z'$ is a morphism. Write $K_{X'}+B'+\bM_{X'}$ (resp. $K_{X'}+\Gamma'+\bN_{X'}$) for the pullback of $K_X+B+\bM_{X}$ (resp. $K_X+\Gamma+\bN_{X}$). Denote by $t$ (resp. $s$) the lc threshold of $f'^*D$ with respect to $(X',B',\bM)$ (resp. $(X',\Gamma',\bN)$) over the generic point of $D$. It suffices to show $t\geq s$.

By construction, $(X',\Gamma'+sf'^*D, \bN)$ is sub-glc over the generic point of $D$. Since $(X,B,\bM)$ has better singularities than $(X,\Gamma,\bN)$, $(X',B'+sf'^*D,\bM)$ also has better singularities than $(X',\Gamma'+sf'^*D, \bN)$ and it hence is sub-glc over the generic point of $D$. Therefore $t\geq s$.
\end{proof}

\begin{lem}\label{better}
Let $f:X\rightarrow Z$ be a contraction of normal varieties over $U$. Let $(X,B,\bM)/U$ and $(X,\Gamma,\bN)/U$ be two g-pairs on $X$ which are glc over the generic point of $Z$. Assume that $K_X+B+\bM_X\sim_{\R } 0/Z$ and $K_X+\Gamma+\bN_X\sim_{\R } 0/Z$. Let $0\leq \alpha\leq 1$ be a real number and let
$$\Omega= \alpha B+(1-\alpha) \Gamma ~\text{ and }~ \bL=\alpha \bM+(1-\alpha) \bN.$$
Let $(Z,B_Z,\bM^Z)/U$, $(Z,\Gamma_Z,\bN^Z)/U$ and $(Z,\Omega_Z,\bL^Z)/U$ be the g-pairs given by adjunction for $(X,B,\bM)$, $(X,\Gamma,\bN)$ and $(X,\Omega,\bL)$ over $Z$ respectively. Then $(Z,\Omega_Z,\bL^Z)$ has better singularities than 
\begin{align}\label{pair}
(Z,\alpha B_Z+(1-\alpha)\Gamma_Z, \alpha \bM^Z+(1-\alpha)\bN^Z).
\end{align}
See Definition \ref{bettersing} for ``better singularities''.
\end{lem}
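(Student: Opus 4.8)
The plan is to unwind the definition of ``better singularities'' (Definition~\ref{bettersing}) and reduce the statement to a pointwise inequality for log discrepancies of prime divisors over $Z$, and then to transport that inequality back to $X$, where it becomes concavity of the log canonical threshold under convex combinations of g-pairs.

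First I would fix an arbitrary prime divisor $E$ over $Z$ and pass to a sufficiently high birational model $\sigma\colon Z'\to Z$ on which $E$ is a divisor and the three moduli $\bb$-divisors $\bM^Z$, $\bN^Z$, $\bL^Z$ all descend, together with a resolution $\tau\colon X'\to X$ on which $\bM$ and $\bN$ (hence $\bL=\alpha\bM+(1-\alpha)\bN$) descend and for which the induced map $f'\colon X'\to Z'$ is a morphism. Writing $B',\Gamma',\Omega'$ for the traces on $X'$, defined by $K_{X'}+B'+\bM_{X'}=\tau^*(K_X+B+\bM_X)$ and so on, the identity $K_X+\Omega+\bL_X=\alpha(K_X+B+\bM_X)+(1-\alpha)(K_X+\Gamma+\bN_X)$ together with additivity of traces of $\bb$-divisors gives $\Omega'=\alpha B'+(1-\alpha)\Gamma'$. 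Let $t_E,s_E,u_E$ be the lc thresholds of $f'^*E$ with respect to $(X',B',\bM)$, $(X',\Gamma',\bN)$, $(X',\Omega',\bL)$ over the generic point of $E$. By the construction of adjunction in \S\ref{adjunction}, the discriminant $\bb$-divisors evaluated on $Z'$ have $E$-coefficients $1-t_E$, $1-s_E$, $1-u_E$, and, since the relevant moduli $\bb$-divisors descend on $Z'$, Definition~\ref{sing} then gives
\[
a(E,Z,B_Z,\bM^Z)=t_E,\qquad a(E,Z,\Gamma_Z,\bN^Z)=s_E,\qquad a(E,Z,\Omega_Z,\bL^Z)=u_E,
\]
while $a(E,Z,\alpha B_Z+(1-\alpha)\Gamma_Z,\alpha\bM^Z+(1-\alpha)\bN^Z)=\alpha t_E+(1-\alpha)s_E$, immediately from Definition~\ref{sing} and the linearity of the discrepancy divisor in the boundary and the moduli part. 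So the whole assertion reduces to $u_E\ge\alpha t_E+(1-\alpha)s_E$.

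For this last inequality I would invoke concavity of the lc threshold. For any $t<t_E$ and $s<s_E$, the g-sub-pairs $(X',B'+tf'^*E,\bM)$ and $(X',\Gamma'+sf'^*E,\bN)$ are sub-glc over the generic point of $E$. Since $\Omega'+(\alpha t+(1-\alpha)s)f'^*E=\alpha(B'+tf'^*E)+(1-\alpha)(\Gamma'+sf'^*E)$ and $\bL=\alpha\bM+(1-\alpha)\bN$, linearity of the log discrepancy yields, for every prime divisor $F$ over $X'$ whose centre lies over the generic point of $E$,
\begin{align*}
a(F,X',\Omega'+(\alpha t+(1-\alpha)s)f'^*E,\bL)
&=\alpha\,a(F,X',B'+tf'^*E,\bM)+(1-\alpha)\,a(F,X',\Gamma'+sf'^*E,\bN)\\
&\ge 0 ,
\end{align*}
so $(X',\Omega'+(\alpha t+(1-\alpha)s)f'^*E,\bL)$ is sub-glc over the generic point of $E$ and hence $u_E\ge\alpha t+(1-\alpha)s$. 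Letting $t\to t_E$ and $s\to s_E$ gives $u_E\ge\alpha t_E+(1-\alpha)s_E$, and combining with the identities above yields $a(E,Z,\Omega_Z,\bL^Z)\ge a(E,Z,\alpha B_Z+(1-\alpha)\Gamma_Z,\alpha\bM^Z+(1-\alpha)\bN^Z)$ for every prime divisor $E$ over $Z$, which is the claim.

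I do not expect a serious obstacle here; the argument is bookkeeping with the definition of adjunction plus the elementary fact that log discrepancies are affine in convex combinations of g-pairs. The one point needing care is the moduli part on the base: the moduli $\bb$-divisor $\bL^Z$ of the adjunction for $(X,\Omega,\bL)$ is in general strictly larger than $\alpha\bM^Z+(1-\alpha)\bN^Z$ — the difference is precisely the effective $\bb$-divisor recording the gaps $u_E-\alpha t_E-(1-\alpha)s_E$ — so the conclusion must be phrased in terms of ``better singularities'' rather than as any equality or direct comparison of g-pairs, and one must first pass to a model on which all three base moduli $\bb$-divisors descend before reading off the log discrepancies of divisors on it.
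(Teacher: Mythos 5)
Your argument is correct and is essentially the paper's own proof: both reduce the assertion to comparing lc thresholds of the pullback of a prime divisor over $Z$ on a common high model $X'\to X$, and both use that the log discrepancy (hence the lc threshold) behaves concavely under convex combinations of the boundary and moduli data, using $\Omega'=\alpha B'+(1-\alpha)\Gamma'$ and $\bL=\alpha\bM+(1-\alpha)\bN$. The only cosmetic differences are that you pass through strict inequalities $t<t_E$, $s<s_E$ and take a limit rather than plugging in the lc thresholds directly, and that you are slightly more explicit about choosing $Z'$ and $X'$ so that all moduli $\bb$-divisors descend; neither changes the substance.
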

\begin{proof}
Let $Z'\rightarrow Z$ be any resolution and $D$ be a prime divisor on $Z'$. Take a high enough resolution $X'\rightarrow X$ such that the induced map $h':X'\dashrightarrow Z'$ is a morphism. Let $t$ (resp. $s$) be the lc threshold of $h'^*D$ with respect to $(X',B',\bM)$ (resp. $(X',\Gamma',\bN)$) over the generic point of $D$ where $K_{X'}+B'+\bM_{X'}$ (resp. $K_{X'}+\Gamma'+\bN_{X'}$) is the pullback of $K_X+B+\bM_X$ (resp. $K_{X}+\Gamma+\bN_{X}$). By definition, the coefficient of $D$ in $B_{Z'}$ (resp. $\Gamma_{Z'}$) is $1-t$ (resp. $1-s$) where $K_{Z'}+B_{Z'}+\bM^{Z}_{Z'}$ (resp. $K_{Z'}+\Gamma_{Z'}+\bN_{Z'}^Z$) is the pullback of $K_Z+B_Z+\bM_Z^Z$ (resp. $K_{Z}+\Gamma_Z+\bM_{Z}^Z$). Hence $a(D,Z,B_Z,\bM^Z)=t$ and $a(D,Z,\Gamma_Z,\bN^Z)=s$. So the log discrepancy of $D$ with respect to the g-pair \eqref{pair} is $\alpha t+(1-\alpha)s$.

Now 
$$(X',\alpha B'+(1-\alpha)\Gamma'+\alpha t h'^*D +(1-\alpha) s h'^*D, \alpha \bM+(1-\alpha)\bN)$$
is glc over the generic point of $D$. Assuming that $K_{X'}+\Omega'+\bL_{X'}$ is the pullback of $K_X+\Omega+\bL_X$, we have $\Omega'=\alpha B'+(1-\alpha)\Gamma'$ and $\bL=\alpha \bM+(1-\alpha) \bN$. Hence the lc threshold of $h'^*D$ with respect to $(X',\Omega',\bL)$ over the generic point of $D$ is as least $\alpha t+(1-\alpha)s$. By definition, the coefficient of $D$ in $\Omega_{Z'}$ is at most  $1-\alpha t-(1-\alpha)s$, where $K_{Z'}+\Omega_{Z'}+\bL^{Z}_{Z'}$ is the pullback of $K_Z+\Omega_Z+\bL^Z_Z$. So 
$$a(D,Z,\Omega_Z,\bL^Z)\geq \alpha t+(1-\alpha)s$$
and the proof is completed.
\end{proof}

\subsection{Toric varieties and toric morphisms} We refer to \cite{Fu93}, \cite{Od88} or \cite{CLS11} for the general theory of toric varieties. Here we collect some definitions and facts on toric varieties. All toric varieties in this paper are assumed to be normal.

A toric variety $X$ is given by a pair $(N_X,\Sigma_X)$, where $N_X$ is a lattice and $\Sigma_X$ is a rational polyhedral fan in $N_X\otimes \mathbb{R}$. A toric morphism between toric varieties $X$ and $Y$ is given by a $\Z$-linear map $\phi:N_X\rightarrow N_Y$ which is compatible with the fan $\Sigma_X$ and $\Sigma_Y$, that is to say, for any cone $\sigma_1\in \Sigma_X$, there is a cone $\sigma_2 \in \Sigma_Y$ such that $\phi_{\R}(\sigma_1)\subset \sigma_2$, where $\phi_{\mathbb{R}}:N_X\otimes \mathbb{R}\rightarrow N_Y\otimes \mathbb{R}$ is the extension of $\phi$. 

A toric divisor on a toric variety $X$ is a divisor which is invariant under the torus action. We say a pair $(X,B)$ is a toric pair if $X$ is a toric variety and $B$ is a toric $\R$-divisor.

There is a one-to-one correspondence between the cones $\sigma$ in $\Sigma_X$ and the torus orbits $O(\sigma)$ in $X$. The dimension of the cone $\sigma$  is equal to the  codimension of the orbit $O(\sigma)$ in $X$.
In particular, a one-dimensional cone $\sigma$, called a ray,  corresponds to a toric prime divisor $\overline{O(\sigma)}$.

If $\Delta$ is the toric boundary divisor of a toric variety $X$, that is, $\Delta$ is the sum of all the toric prime divisors on $X$, then $(X,\Delta)$ is lc and $K_X+\Delta\sim 0$. Moreover, $a(D,X,\Delta)=0$ for any toric prime divisor $D$ over $X$.

A toric variety $X$ is $\mathbb{Q}$-factorial if and only if the fan $\Sigma_X$ is simplicial, that is, every cone in $\Sigma_X$ is generated by a set of $\R$-linear independent vectors. 

If a toric morphism $f:X\rightarrow Y$ given by $\phi:N_X\rightarrow N_Y$ is a contraction, then $\phi$ is surjective.

If $f:X\rightarrow Z$ is a toric contraction, then $X$ is of Fano type over $Z$.

Every toric varieties is a Mori dream space, that is to say, if $X\to Z$ is a toric contraction,
then we can run a minimal model program (MMP, for short) on any $\R$-Cartier $\R$-divisor $D$ relatively over $Z$
and it terminates with
either a $D$-negative fibre space or a $D$-minimal model. Moreover, all the steps of the MMP are toric. See 
\cite[Chapter 14]{Ma02} for the $\Q$-factorial case.

\begin{lem}\cite[p.133]{CLS11}\label{contraction-over-torus}
Let $X,Z$ be two toric varieties given by $(N_X,\Sigma_X)$, $(N_Z,\Sigma_Z)$ respectively and $f:X\to Z$ be a toric morphism given by a surjective $\Z$-linear map $\phi:N_X\to N_Z$. Let $F$ be a toric varieties given by $(N_0,\Sigma_0)$ where $N_0=\ker(\phi)$ and 
$$\Sigma_0=\{\sigma\in \Sigma_X\mid \sigma\subset (N_0)_{\R}\}$$
is a sub-fan of $\Sigma_X$. Then $f^{-1}(T_Z)\simeq F\times T_Z$, where $T_Z$  is the torus in $Z$.
\end{lem}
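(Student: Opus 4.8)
The plan is to reduce the statement to the combinatorics of fans together with the standard product formula for toric varieties, so I begin by fixing a convenient splitting. Since $\phi\colon N_X\to N_Z$ is surjective and $N_Z$ is a free $\Z$-module, the exact sequence $0\to N_0\to N_X\xrightarrow{\phi}N_Z\to 0$ splits; choosing a section $s\colon N_Z\to N_X$ of $\phi$ gives a lattice isomorphism $N_X\cong N_0\oplus N_Z$ under which $\phi$ becomes the projection onto the second factor. Passing to tori this reads $T_X\cong T_F\times T_Z$, where $T_F$ is the torus of $F$, and $f|_{T_X}$ is identified with the projection $T_F\times T_Z\to T_Z$.

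Next I would identify $f^{-1}(T_Z)$ with the open toric subvariety of $X$ determined by the subfan $\Sigma_0$. The subset $f^{-1}(T_Z)$ is open and $T_X$-invariant, hence a union of torus orbits $O(\sigma)$, $\sigma\in\Sigma_X$; and by the orbit--cone dictionary for toric morphisms, $f\bigl(O(\sigma)\bigr)\subset T_Z$ if and only if the smallest cone of $\Sigma_Z$ containing $\phi_\R(\sigma)$ is $\{0\}$, i.e.\ $\phi_\R(\sigma)=\{0\}$, i.e.\ $\sigma\subset(N_0)_\R$. Since $U_\sigma$ is the union of the orbits $O(\tau)$ for faces $\tau\preceq\sigma$, this shows $f^{-1}(T_Z)=\bigcup_{\sigma\in\Sigma_0}U_\sigma$, the toric variety defined by the fan $\Sigma_0$ inside the lattice $N_X$ --- note that the relevant lattice is $N_X$, not $N_0$.

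Finally I would transport $\Sigma_0$ across the splitting $N_X\cong N_0\oplus N_Z$: since each $\sigma\in\Sigma_0$ lies in $(N_0)_\R$, it corresponds to $\sigma\times\{0\}$, so $\Sigma_0$ becomes the product of the fan of $F$ with the trivial fan $\{0\}$ of $T_Z$. The product formula $X_{\Sigma'\times\Sigma''}\cong X_{\Sigma'}\times X_{\Sigma''}$ then gives $f^{-1}(T_Z)\cong F\times T_Z$, and because $\phi$ is the second projection under the splitting, this isomorphism intertwines $f$ with $\mathrm{pr}_2\colon F\times T_Z\to T_Z$ (the two morphisms agree on the dense torus $T_X\subset f^{-1}(T_Z)$, hence everywhere by separatedness). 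The one delicate point is the middle step --- verifying that the subfan $\Sigma_0$ cuts out all of $f^{-1}(T_Z)$, not merely the union of those affine charts $U_\sigma$ with $\sigma\subset(N_0)_\R$ --- which is exactly where one invokes the description of images of orbits under a toric morphism; the remaining steps are formal. (As the statement is \cite[p.\,133]{CLS11}, one may also simply cite it.)
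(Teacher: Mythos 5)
The paper provides no proof of this lemma; it is stated with a citation to \cite[p.\,133]{CLS11} and treated as a black box. Your argument is correct and is the standard proof: choose a splitting of the surjection $\phi$ to identify $N_X\cong N_0\oplus N_Z$, use the orbit--cone description of toric morphisms to show that $f^{-1}(T_Z)$ is precisely the open toric subvariety $\bigcup_{\sigma\in\Sigma_0}U_\sigma$ of $X$ (fan $\Sigma_0$, ambient lattice $N_X$), and then observe that under the splitting $\Sigma_0$ becomes the product of the fan of $F$ in $N_0$ with the trivial fan $\{0\}$ in $N_Z$, so the product formula for toric varieties gives $f^{-1}(T_Z)\cong F\times T_Z$ with $f$ intertwined with the second projection. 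The one step that is not purely formal -- that every orbit mapping into $T_Z$ has cone contained in $(N_0)_\R$ -- is exactly where you correctly invoke the fact that $f(O(\sigma))$ lies in the orbit $O(\tau)$ of the minimal cone $\tau\in\Sigma_Z$ containing $\phi_\R(\sigma)$, so that $f(O(\sigma))\subset T_Z$ if and only if $\phi_\R(\sigma)=\{0\}$.
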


We also need the following lemma in \cite{BC21} regarding adjunction for toric pairs.
\begin{lem}\label{toricadjunction}\cite[Lemma 2.11]{BC21}
Let $f:X\rightarrow Z$ be a toric contraction between toric varieties and $\Delta,\Delta_Z$ be the toric boundary divisors of $X,Z$ respectively. Then $(Z,\Delta_Z,{\bf 0})$ is the g-pair given by adjunction for $f:(X,\Delta)\rightarrow Z$.
\end{lem}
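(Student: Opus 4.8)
The plan is to compute directly the generalized pair produced by adjunction for $f\colon(X,\Delta)\to Z$, namely its discriminant divisor $B_Z$ and its moduli $\bb$-divisor $\bM^Z$, and to check that these equal $\Delta_Z$ and $\mathbf 0$ respectively. First I would note that $K_X+\Delta\sim 0$ (indeed $\mathcal O_X(K_X+\Delta)\cong\mathcal O_X$ for the toric boundary), so in the adjunction formula $K_X+\Delta\sim_{\R}f^*(K_Z+B_Z+\bM^Z_Z)$ the divisor $K_Z+B_Z+\bM^Z_Z$ pulls back to something $\R$-trivial; since $f$ is a contraction, $K_Z+B_Z+\bM^Z_Z\sim_{\R}0$. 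Hence it suffices to prove $B_Z=\Delta_Z$: the relation $K_Z+\Delta_Z\sim 0$ then forces $\bM^Z_Z\sim_{\R}0$, and running the same argument over all (toric) birational models of $Z$ will show that $\bM^Z$ is $\R$-linearly trivial, i.e. $\bM^Z=\mathbf 0$ since the moduli $\bb$-divisor is only defined up to $\R$-linear equivalence.

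The heart of the matter is the identification $B_Z=\Delta_Z$, i.e. showing that the coefficient $1-t_D$ of a prime divisor $D$ in $B_Z$ is $1$ when $D$ is a toric prime divisor of $Z$ and $0$ otherwise, where $t_D$ is the lc threshold of $f^*D$ with respect to $(X,\Delta)$ over the generic point $\eta_D$ of $D$. If $D=D_\rho$ is toric, with $\rho$ a ray of $\Sigma_Z$, then since $f$ is proper one has $|\Sigma_X|=\phi_\R^{-1}(|\Sigma_Z|)$, so after a toric modification of $X$ I may assume $\Sigma_X$ has a ray $\rho'$ with $\phi_\R(\rho')$ in the relative interior of $\rho$; the associated toric divisor $E=D_{\rho'}$ over $X$ then satisfies $f(\cent_X E)=\overline D$ and $\mult_E f^*D\ge 1$, while $a(E,X,\Delta)=0$ because toric discrepancies of the toric boundary vanish. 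Hence $a(E,X,\Delta+tf^*D)=-t\,\mult_E f^*D<0$ for every $t>0$, so $(X,\Delta+tf^*D)$ is not lc over $\eta_D$ and $t_D=0$. If $D$ is not toric, then $\eta_D$ lies in the torus $T_Z$, and I would invoke Lemma \ref{contraction-over-torus} to write $f^{-1}(T_Z)\cong F\times T_Z$ with $F$ the toric fibre; under this isomorphism only the toric divisors of $X$ whose ray lies in $\ker\phi$ meet $f^{-1}(T_Z)$, so $\Delta$ restricts to $\Delta_F\times T_Z$, while $f^*D$ restricts to $F\times W$ for a divisor $W\subset T_Z$ that is smooth near $\eta_D$. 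Since adding $t$ times the pullback of a smooth divisor to the lc product pair $(F,\Delta_F)\times T_Z$ keeps it lc exactly for $0\le t\le 1$, we get $t_D=1$. Therefore $B_Z=\Delta_Z$.

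To conclude $\bM^Z=\mathbf 0$, I would rerun the discriminant computation over an arbitrary toric birational model $\sigma\colon Z'\to Z$: taking a toric resolution $X'$ of the main component of $X\times_Z Z'$, the induced map $f'\colon X'\to Z'$ is again a toric morphism with surjective lattice map, and the pullback of $K_X+\Delta$ to $X'$ equals $K_{X'}+\Delta_{X'}$ (toric discrepancies vanish), so the argument above gives $B_{Z'}=\Delta_{Z'}$; together with $K_{Z'}+\Delta_{Z'}\sim 0$ this yields $\bM^Z_{Z'}\sim_{\R}0$ on every model, hence $\bM^Z=\mathbf 0$. The step I expect to need the most care is the toric case of the discriminant computation: one must remember that $D$ need not be $\Q$-Cartier, but — exactly as in the definition of $B_Z$ — only the pullback of $D$ over the smooth locus of $Z$ near $\eta_D$ is used, and the toric divisor $E$ above is still available there, so no difficulty arises.
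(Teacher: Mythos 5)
The paper does not prove this lemma; it simply cites \cite[Lemma~2.11]{BC21}, so there is no internal argument to compare against and I can only judge whether your proposal stands on its own.

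Your computation of the discriminant part is correct. For a toric prime divisor $D=D_\rho$ on $Z$, choosing a toric valuation $E$ over $X$ whose ray maps into the relative interior of $\rho$ (available after a toric subdivision, which does not change the lc threshold) gives $a(E,X,\Delta)=0$ and $\mult_E f^*D\geq 1$, hence $t_D=0$; for a non-toric $D$, the product structure $f^{-1}(T_Z)\cong F\times T_Z$ from Lemma~\ref{contraction-over-torus} makes $(X,\Delta+tf^*D)$ a product of $(F,\Delta_F)$ with $(T_Z,tW)$ near the generic fibre over $\eta_D$, giving $t_D=1$. You are also right that only the pullback of $D$ over the smooth locus near $\eta_D$ is used, so $\Q$-Cartierness of $D$ is not an issue. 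This establishes $B_Z=\Delta_Z$, and by the same reasoning $B_{Z'}=\Delta_{Z'}$ on every \emph{toric} birational model $Z'$.

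The identification $\bM^Z=\mathbf 0$ has a gap. With the toric choices $K_X=-\Delta$, $L=0$, $K_{Z'}=-\Delta_{Z'}$ you actually get $M_{Z'}=0$ exactly (not just $\sim_\R 0$) on \emph{toric} models $Z'$, but the statement $\bM^Z=\mathbf 0$ requires $\bM^Z_{Z'}=0$ on \emph{all} models, i.e.\ that $\bM^Z$ descends to a model where it vanishes. Your phrase ``on every model, hence $\bM^Z=\mathbf 0$'' overstates what you have shown: the models you treat are toric, and knowing $\bM^Z_{Z'}\sim_\R 0$ there does not by itself pin down the $\bb$-divisor. The theorem that $\bM^Z$ is $\bb$-nef (cf.\ \cite[Theorem~11.4.4]{CHLX23}) gives descent on \emph{some} model $Z_0$, but that model is not a priori toric, and the negativity lemma only shows $\bM^Z$ is anti-effective over a toric model, not zero. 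What is needed is the standard but non-trivial input that for this (lc-trivial, toric) fibration the moduli $\bb$-divisor already descends on a toric log resolution of $(Z,\Delta_Z)$; once this is granted, your direct computation on that model finishes the proof. So the discriminant computation is solid, but the moduli part requires an explicit appeal to descent on a toric model rather than the informal ``defined up to $\R$-linear equivalence'' step.
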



\section{Proofs of main results}

In this section, we will prove  a more general form of Theorem \ref{thmtech} for generalized pairs as follows.
\begin{thm}\label{generalized}
Let $r$ be a positive integer and $0<\epsilon\leq 1$ be a real number.
Assume
\begin{itemize}
  \item[(a)]  $f\colon X\rightarrow Z$ is a toric contraction of relative dimension $r$ between toric varieties over $U$ with a codimension $\geq 1$ point $z\in Z$,
  \item[(b)]  $(X,B,\bM)/U$ is a g-pair (not necessarily toric) with $\mld(X/Z\ni z,B,\bM) \geq \epsilon$,
  \item[(c)]  $K_X+B+\bM_X\sim_{\R}0/Z$, and
  \item[(d)]  $\Delta$ is the toric boundary divisor of $X$.
\end{itemize}
Let
  $$
  \Gamma^{\alpha}=\alpha B+(1-\alpha)\Delta ~ \text{ and } ~\bN^{\alpha}=\alpha \bM \quad \text{ where } \alpha=1/r!
  $$
and let $(Z, \Gamma^{\alpha}_Z,\bN^{\alpha,Z})/U$ be the g-pair on $Z$ given by adjunction for $f:(X,\Gamma^{\alpha},\bN^{\alpha})\rightarrow Z$. Then 
$$\mld(Z\ni z, \Gamma^{\alpha}_Z,\bN^{\alpha,Z}) \geq \delta,$$
where $\delta=\delta(r,\epsilon)$  as in \eqref{formula}.
\end{thm}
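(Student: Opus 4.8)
The strategy is to reduce the general statement to the relative-dimension-one case, which has already been handled with an explicit bound in \cite{Ch22}. First I would apply the toric MMP over $Z$ to the divisor $-K_X$ (which is big over $Z$ since $X$ is of Fano type over $Z$), running it relatively over $Z$; since toric varieties are Mori dream spaces and all steps are toric, this terminates either with a Mori fiber space or a minimal model, but since $-K_X\sim_{\R}-(B+\bM_X)$ is trivial over $Z$ only up to the boundary, the process produces a tower of toric contractions. Using Lemma \ref{trans}, the g-pair given by adjunction for the composite map agrees with the iterated adjunction, so it is enough to control the singularities of the base at each step. Since passing to a birational model of $X$ that only extracts divisors does not worsen the hypothesis $\mld(X/Z\ni z,B,\bM)\geq\epsilon$ appropriately (here one must be careful: extracting a toric divisor and replacing $B$ by a sub-boundary is why the averaging with $\Delta$ is forced), the key structural input is a factorization lemma (the analogue of Lemma \ref{diagram} alluded to in the introduction): after extracting a single toric divisor of log discrepancy $\leq r$, a $\Q$-factorial toric Mori fiber space of relative dimension $r$ factors as a composition of toric contractions each of relative dimension $<r$, \emph{without} taking a finite cover.

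Once that factorization is in hand, I would argue by induction on $r$. The base case $r=1$ is exactly \cite{Ch22}, giving $\delta(1,\epsilon)=\epsilon^2/2$, which matches \eqref{formula}. For the inductive step, write $X\to Y\to Z$ with $X\to Y$ of relative dimension $r_1$ and $Y\to Z$ of relative dimension $r_2$, $r_1+r_2=r$. Apply adjunction for $(X,\Gamma^{\alpha},\bN^{\alpha})\to Y$ to produce a g-pair $(Y,\Gamma_Y,\bN_Y)$; by Lemma \ref{better} this has better singularities than the average of the adjunction data, and by Lemma \ref{betterbase} better singularities downstairs are preserved under further adjunction. The bookkeeping is to track how the constant $\alpha=1/r!$ interacts with the constants $1/r_1!$ and $1/r_2!$ that the inductive hypothesis wants on the two sub-fibrations — this is precisely where the factorial product $\prod_{i=1}^r i^{2^i}$ and the power $2^{2^r-1}$ in \eqref{formula} come from: each halving of the relative dimension squares the exponent of $\epsilon$ (hence $\epsilon^{2^r}$) and each extraction of a divisor of log discrepancy $\leq i$ contributes a factor controlled by $i$. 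One also uses that the toric boundary $\Delta_Z$ of $Z$ is exactly the adjunction base of the toric boundary $\Delta$ of $X$ (Lemma \ref{toricadjunction}), so the $(1-\alpha)\Delta$ part of $\Gamma^{\alpha}$ contributes in a completely controlled, purely toric way.

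For assertion analogous to part (2) — the multiplicity bound on fibers over codimension-one points — I would specialize to $\dim Z = $ (something) with $z$ of codimension one in $Z$: the coefficient $1-t_D$ of a prime divisor $D\ni z$ in the discriminant $\Gamma^{\alpha}_Z$ is computed by an lc threshold of $f^*D$, and an elementary toric computation relates $t_D$ to the multiplicities of the components of $f^*z$; the lower bound $\mld(Z\ni z,\Gamma^{\alpha}_Z,\bN^{\alpha,Z})\geq\delta$ then forces each such multiplicity $\leq 1/\delta$. This step is essentially formal given the main inequality.

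\textbf{Main obstacle.} The crux is the factorization lemma: showing that a $\Q$-factorial toric Mori fiber space of relative dimension $r$, after extracting one toric divisor of log discrepancy at most $r$ (and \emph{not} passing to a finite cover), decomposes as a composition of lower-dimensional toric contractions while keeping the pair hypotheses intact. In \cite{BC21} the finite cover was unavoidable and made the order of the cover — hence $\delta$ — ineffective; replacing it by a single low-discrepancy divisor extraction is the technical heart, and it is also where one must verify that averaging with the toric boundary at the chosen weight $\alpha = 1/r!$ keeps $(X,\Gamma^{\alpha},\bN^{\alpha})$ glc with a controlled (explicit) minimal log discrepancy on the fibers, so that the inductive hypothesis actually applies on each piece.
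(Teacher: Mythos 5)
Your proposal follows essentially the same route as the paper: reduce to a toric Mori fiber space by running a relative toric MMP, factor it through a lower-dimensional base after extracting a single toric divisor of log discrepancy at most $r$ (Lemma~\ref{diagram}), average with the toric boundary $\Delta$ to keep the pair effective, and induct on the relative dimension using Lemmas~\ref{trans}, \ref{better}, \ref{betterbase} and \ref{toricadjunction}, with the $r=1$ case supplying the base. Two small points you should fill in: the MMP is run on $K_X$ over $Z$ (not on $-K_X$; it is the bigness of $-K_X/Z$ that guarantees the MMP ends with a Mori fiber space), and the base case must be the generalized-pair upgrade of \cite[Theorem 1.4]{Ch22} (Lemma~\ref{dim1}, proved by a standard perturbation absorbing $\bM$ into the boundary) rather than the cited result itself.
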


We start with showing a generalized version of \cite[Theorem 1.4]{Ch22}, i.e. showing that one can take $\delta=\epsilon^2/2$ in a generalized version of Shokurov conjecture in relative dimension one. Its proof is similar to that of \cite[Lemma 3.1]{BC21}.

\begin{lem}\label{dim1}
Let $f:X\rightarrow Z$ be a contraction between normal varieties over $U$, $(X,B,\bM)/U$ be a g-pair and $z\in Z$ be a codimension $\geq 1$ point such that

\begin{itemize}

\item $\dim X - \dim Z=1$,

\item $K_X+B+\bM_X\sim_{\mathbb R} 0/Z$, 

\item $\mld(X/Z\ni z,B,\bM)\geq \epsilon$, where $0<\epsilon\leq 1$, and

\item $X$ is of Fano type over $Z$.
\end{itemize}
Let $(Z,B_Z,\bM^Z)/U$ be the g-pair given by adjunction for $f:(X,B,\bM)\rightarrow Z$. Then
\begin{align*}
\mld(Z\ni z, B_Z,\bM^Z) \geq \delta(1,\epsilon)=\epsilon^2/2.
\end{align*}
\end{lem}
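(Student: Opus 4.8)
The plan is to reduce to the case where $f$ is a Mori fiber space (of relative dimension one) by running a relative MMP, and then to analyze this base case directly via the structure of a conic bundle. First I would note that since $X$ is of Fano type over $Z$, we may run a $(-K_X)$-MMP over $Z$; replacing $X$ by the end result and pulling back $B,\bM$ (here one must be careful that the MMP is $(K_X+B+\bM_X)$-trivial so the glc condition and $\mld(X/Z\ni z,\cdot)\geq\epsilon$ are preserved, using that contracted divisors have image over $Z$ strictly inside and hence do not affect $\mld$ over $z$), we may assume $X\to Z$ factors as $X\to T\to Z$ with $X\to T$ a Mori fiber space of relative dimension one. By Lemma \ref{trans}, the g-pair on $Z$ given by adjunction for $(X,B,\bM)\to Z$ agrees with the one obtained by first passing to $(T,B_T,\bM^T)$ and then applying adjunction for $T\to Z$; since log discrepancies over $z$ only get larger under adjunction (adjunction preserves glc and the discriminant construction is monotone — this is the content of Lemma \ref{betterbase} applied to the trivial comparison), it suffices to prove the bound $\mld(T/Z\ni z, B_T,\bM^T)\geq\epsilon^2/2$ first at the level $X\to T$ and then propagate. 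Actually the cleanest route: reduce once and for all to $\dim X-\dim Z=1$ with $X\to Z$ itself a Mori fiber space, which is permissible because the general fiber already has dimension one.

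So assume $\rho(X/Z)=1$, $-K_X$ ample$/Z$, $\dim X=\dim Z+1$. The generic fiber is $\mathbb{P}^1$, so $X\to Z$ is a (possibly non-flat, non-smooth) conic bundle; over the generic point of a prime divisor $D\subset Z$ the relevant behavior is governed by the multiplicities of the components of $f^*D$ and the coefficients of $B$ along them. Fix a prime divisor $D$ on a birational model $Z'\to Z$ with center $\overline z$; localizing at the generic point of $D$ we reduce to $\dim Z=1$, $Z$ a smooth curve germ, $z=D$ the closed point. Now I want to bound $1-t_D = \mathrm{mult}_D B_Z$, i.e. show $t_D\geq \epsilon^2/2$ — more precisely bound $a(D,Z,B_Z,\bM^Z)$, but since $\bM^Z$ is $\bb$-nef its contribution only helps, so it is enough to bound the discriminant part. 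Writing $f^*z=\sum m_i F_i$ with $F_i$ the fiber components and $b_i=\mathrm{mult}_{F_i}B$, the condition $\mld(X/Z\ni z,B,\bM)\geq\epsilon$ forces $1-b_i\geq\epsilon$ for each $i$ (these $F_i$ are divisors mapping onto $\overline z$) and, crucially, also constrains divisors obtained by blowing up inside the fiber: over a point where two components $F_i,F_j$ meet, a single blow-up produces a divisor with discrepancy controlled by $m_i,m_j,b_i,b_j$, and iterating (as in the proof of Lemma \ref{lalala}) shows that large $m_i$ forces $\mld<\epsilon$ unless $b_i$ is close enough to $1$ to compensate. Quantitatively, from $K_X+B\sim_{\mathbb R}0/Z$ one reads off, on a resolution where the central fiber is snc, that $t_D = \mathrm{lct}$ is computed by the most singular component, and a direct computation with the two-component (or $\mathbb P^1$-with-multiple-fiber) local model gives $t_D\geq \epsilon^2/2$; the extremal case is the standard one where $-K_X$ has anticanonical degree $2$ on the fiber and the fiber is a chain, mirroring the $r=1$ examples in \cite{AB14}.

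The honest way to run this last computation, rather than hand-crafting conic bundle normal forms, is to invoke the structure of relative dimension one adjunction directly: over the curve germ $Z$, after base change and resolution the moduli part $\bM^Z$ becomes trivial on a model and the formula $K_X+B\sim_{\mathbb R}f^*(K_Z+B_Z+M_Z)$ with $\deg(K_X+B)|_F = 0$ pins down $\mathrm{mult}_z B_Z$ in terms of how $B$ meets the fiber; combining the bound $1-b_i\geq\epsilon$ on horizontal-over-$D$ multiplicities with the inequality relating fiber multiplicities to $\mld$ coming from the toric-like local blow-up analysis yields $\mathrm{mult}_z B_Z \leq 1-\epsilon^2/2$. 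I expect the main obstacle to be precisely this quantitative step — proving the sharp inequality $t_D\ge\epsilon^2/2$ in the conic bundle case, i.e. controlling how badly the fiber can degenerate while keeping $(X,B,\bM)$ $\epsilon$-lc over $z$ — since everything else (MMP reduction, Lemma \ref{trans}, monotonicity) is formal. This is exactly the content of \cite[Theorem 1.4]{Ch22} in the $\bM=0$ case, so the real work is checking that its proof (which passes through an explicit analysis of the surface/conic-bundle picture) goes through verbatim when a $\bb$-nef moduli part $\bM$ is carried along — and here the key point is that $\bM$ only improves singularities both upstairs (it is part of the hypothesis $\mld\geq\epsilon$) and downstairs ($\bM^Z$ is $\bb$-nef, so dropping it can only decrease $\mld(Z\ni z,\cdot)$), so no new difficulty is introduced.
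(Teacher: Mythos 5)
The proposal correctly identifies the destination — everything reduces to the computation in \cite[Theorem~1.4]{Ch22} in relative dimension one — but it does not actually supply the one step that is non-trivial here, namely the passage from the generalized pair $(X,B,\bM)$ to an ordinary pair to which \cite{Ch22} applies. Your final paragraph concedes the point: ``the real work is checking that its proof \dots goes through verbatim when a $\bb$-nef moduli part $\bM$ is carried along.'' That is not a reduction; it is a promissory note. Moreover the monotonicity heuristic you offer in its place (``$\bM^Z$ is $\bb$-nef, so dropping it can only decrease $\mld(Z\ni z,\cdot)$'') is not available: the discriminant $B_Z$ is itself defined via lc thresholds of $f^*D$ with respect to the g-pair $(X,B,\bM)$, so one cannot ``drop'' $\bM$ downstairs while keeping $B_Z$ fixed, and upstairs one cannot drop $\bM$ either without breaking $K_X+B+\bM_X\sim_{\R}0/Z$. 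The quantity to bound is $t_D=\lct(X,B,\bM;f^*D)$ over the generic point of $D$, and $\bM$ genuinely sits inside that threshold.

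What the paper does instead is short and avoids re-deriving anything: it uses the Fano-type hypothesis to perturb $\bM$ away. Since $-K_X$ is big over $Z$, write $\pi^*(-K_X)\sim_{\Q}H'+C'/Z$ with $H'$ ample$/Z$ and $C'\geq 0$ on a resolution $X'$ where $\bM$ descends, and let $\pi^*K_X=K_{X'}+E'$ (so $E'\leq B'$, $(X',E')$ sub-klt). For small $u>0$ set $B'_u=(1-u)B'+uE'$, pick a general $0\leq L'\sim_{\R}(1-u)\bM_{X'}+uH'/Z$ (this system is ample$/Z$, hence has a general member), and set $\Delta'=B'_u+uC'+L'$. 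Then $K_{X'}+\Delta'\sim_{\R}0/Z$ and $K_{X'}+\Delta'$ is the pullback of $K_X+\Delta$ with $\Delta=\pi_*\Delta'$, while $\mld(X'/Z\ni z,\Delta')\geq\epsilon'$ with $\epsilon-\epsilon'$ as small as desired. Now $(X,\Delta)\to Z$ is an honest lc-trivial fibration of Fano type, \cite[Theorem~1.4]{Ch22} gives the bound $\epsilon'^2/2$ on the corresponding lc threshold, and letting $u\to 0$ recovers $t_D\geq\epsilon^2/2$. Note also that the MMP-to-Mori-fiber-space reduction at the start of your proposal is unnecessary: \cite{Ch22} is stated for Fano-type fibrations, not just Mori fiber spaces, so no such reduction is needed, and the whole conic-bundle/surface analysis you sketch in the middle paragraph is exactly what one wants to outsource to \cite{Ch22} rather than redo.
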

\begin{proof}

Since the singularities of $(Z, B_Z,\bM^Z)/U$ are independent of the choice of $U$, we may assume that $U=Z$. Shrinking $Z$ around $z$, by Lemma \ref{lalala} we may assume that $(X,B,\bM)$ is glc. Let $D$ be a prime divisor on some high resolution $Z'\rightarrow Z$ with $\cent_Z D=\overline{z}$. Let $\pi:X'\rightarrow X$ be a high enough resolution such that $\bM$ descends on $X'$ and the induced map $f':X'\dashrightarrow Z'$ is a morphism. Write $K_{X'}+B'+\bM_{X'}$ for the pullback of $K_X+B+\bM_{X}$. Then $(X',B')$ is sub-lc and $\mld(X'/Z\ni z, B')\geq \epsilon$. Denote by $t$ the lc threshold of $f'^*D$ with respect to $(X',B')$ over the generic point of $D$. It suffices to show that $t$ is bounded from below by $\epsilon^2/2$. 

We may assume that $X$ is $\Q$-factorial. Since $X$ is of Fano type over $Z$, $X$ is klt and $-K_X$ is big over $Z$. So we can write 
$$\pi^*(-K_X)\sim_{\Q} H'+C'/Z$$ where $H'$ is ample over $Z$ and $C'\geq 0$. We can also write $\pi^*K_X=K_{X'}+E'$. Then $E'\leq B'$ and $(X',E')$ is sub-klt. For each sufficiently small real number $u>0$, let 
$$B'_u=(1-u)B'+uE',$$
then we have $(X',B'_u)$ is sub-klt and $\mld(X'/Z\ni z, B'_u)\geq \epsilon$. So we can find a general 
$$0\leq L'\sim_{\R} (1-u)\bM_{X'}+uH'/Z$$
(note that $H'$ is ample$/Z$ and $\bM_{X'}$ is nef$/Z$) such that letting
$$\Delta'=B'_u+uC'+L',$$
we have $\mld(X'/Z\in z,\Delta')\geq \epsilon'$ where $\epsilon-\epsilon'>0$ is sufficiently small. Moreover, over $Z$ we have
\begin{align*}
K_{X'}+\Delta'&\sim_{\R} K_{X'}+(1-u)B'+uE'+uC'+(1-u)\bM_{X'}+uH'\\
&=(1-u)(K_{X'}+B'+\bM_{X'})+u(K_{X'}+E')+u(H'+C')\\
&\sim_{\R} (1-u)(K_{X'}+B'+\bM_{X'}) \sim_{\R} 0.
\end{align*}
Therefore, letting $\Delta=\pi_*\Delta'$, we deduce that $K_{X'}+\Delta'$ is the pullback of $K_X+\Delta$. 

Now if we choose $u>0$ to be sufficiently small, the lc threshold $s$ of $f'^* D$ with respect to $(X',\Delta')$ over the generic point of $D$ is sufficiently close to $t$. Applying \cite[Theorem 1.4]{Ch22} to $(X,\Delta)\to Z$, we deduce that $s\geq \epsilon'^2/2$, where $\epsilon-\epsilon'>0$ is sufficiently small. Hence $t\geq \epsilon^2/2$.
\end{proof}

To prove Theorem \ref{generalized}, we need a couple of lemmas.

\begin{lem}\label{complex}
Let  $0<\epsilon\leq 1$ be a real number and $r,s,t$ be positive integers such that $r=s+t$. Suppose Theorem \ref{generalized} holds in relative dimension $s$ and in relative dimension $t$. Keep the assumptions (a),(b),(c) and (d) in Theorem \ref{generalized}. Additionally assume that
$f:X\rightarrow Z$ can be factored as $X\xrightarrow{g} Y \xrightarrow{h} Z$ where $h$ and $g$ are toric contractions of relative dimension $s$ and $t$ respectively. Let
  $$
  \Gamma^{\beta}=\beta B+(1-\beta)\Delta ~ \text{ and } ~\bN^{\beta}=\beta \bM \quad \text{ where } \beta=1/(s!t!)
  $$
and let $(Z, \Gamma^{\beta}_Z,\bN^{\beta,Z})/U$ be the g-pair given by adjunction for $f:(X,\Gamma^{\beta},\bN^{\beta})\rightarrow Z$.
Then  
$$\mld(Z\ni z, \Gamma^{\beta}_Z,\bN^{\beta,Z}) \geq \delta\big(t,\delta(s,\epsilon)\big)= \frac{\epsilon^{2^{s+t}}}{2^{2^{s+t}-1}\prod\limits_{i=1}^s i^{2^{i+t}}\cdot\prod\limits_{i=1}^t i^{2^i}}.$$

\end{lem}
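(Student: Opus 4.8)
The plan is to descend along the factorisation $X\xrightarrow{g}Y\xrightarrow{h}Z$ in two steps, applying the inductive hypothesis (Theorem \ref{generalized}) once to $g$ and once to $h$, and then to compare the generalized pair so produced on $Z$ with $(Z,\Gamma^{\beta}_Z,\bN^{\beta,Z})$ (recall $g$ and $h$ have relative dimensions $s$ and $t$ respectively). Since $\mld(X/Z\ni z,B,\bM)\geq\epsilon>0$, Lemma \ref{lalala} shows $(X,B,\bM)$ is glc over a neighbourhood of $z$, hence over the generic point of $Z$, and --- as $g$ and $h$ are over $Z$ --- over the generic point of $Y$; so the adjunction generalized pairs below are genuine generalized pairs and all of the ``better singularities'' comparisons are legitimate. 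Put $\alpha_s=1/s!$, $\alpha_t=1/t!$ (so $\beta=\alpha_s\alpha_t$), $\Gamma^{\alpha_s}=\alpha_sB+(1-\alpha_s)\Delta$, $\bN^{\alpha_s}=\alpha_s\bM$, and record the elementary identities on $X$ that $\Gamma^{\beta}=\alpha_t\Gamma^{\alpha_s}+(1-\alpha_t)\Delta$ and $\bN^{\beta}=\alpha_t\bN^{\alpha_s}$; note also $K_X+\Gamma^{\alpha_s}+\bN^{\alpha_s}_X\sim_{\R}0/Z$ (hence $\sim_{\R}0/Y$), $K_X+\Delta\sim 0$, and that $(X,\Gamma^{\alpha_s},\bN^{\alpha_s})$, being a convex combination of $(X,B,\bM)$ with the lc pair $(X,\Delta)$, is glc over the relevant generic points.

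\emph{Step 1 (the contraction $g$).} Let $E'$ be a prime divisor over $Y$ with $h(\cent_Y E')=\overline z$, and let $y$ be the generic point of $\cent_Y E'$; then $h(\overline y)=\overline z$, and any divisor over $X$ centred at $\overline y$ on $Y$ is centred at $\overline z$ on $Z$, so $\mld(X/Y\ni y,B,\bM)\geq\mld(X/Z\ni z,B,\bM)\geq\epsilon$. Applying Theorem \ref{generalized} in relative dimension $s$ to $g:(X,B,\bM)\to Y$ at $y$ (the constant prescribed being $\alpha_s=1/s!$) gives $a(E',Y,\Gamma^{\alpha_s}_Y,\bN^{\alpha_s,Y})\geq\mld(Y\ni y,\Gamma^{\alpha_s}_Y,\bN^{\alpha_s,Y})\geq\delta(s,\epsilon)$, where $(Y,\Gamma^{\alpha_s}_Y,\bN^{\alpha_s,Y})$ is the generalized pair given by adjunction for $g:(X,\Gamma^{\alpha_s},\bN^{\alpha_s})\to Y$. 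Taking the infimum over all such $E'$ gives $\mld(Y/Z\ni z,\Gamma^{\alpha_s}_Y,\bN^{\alpha_s,Y})\geq\delta(s,\epsilon)$, and $K_Y+\Gamma^{\alpha_s}_Y+\bN^{\alpha_s,Y}_Y\sim_{\R}0/Z$ by adjunction.

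\emph{Step 2 (the contraction $h$).} The generalized pair $(Y,\Gamma^{\alpha_s}_Y,\bN^{\alpha_s,Y})$, together with the toric boundary $\Delta_Y$ of $Y$ and the point $z$, now satisfies the hypotheses of Theorem \ref{generalized} for the toric contraction $h:Y\to Z$ with $\delta(s,\epsilon)$ in the role of $\epsilon$; applying it in relative dimension $t$ (constant $\alpha_t=1/t!$) produces the generalized pair $\mathcal G$ on $Z$ given by adjunction for $h:\big(Y,\,\alpha_t\Gamma^{\alpha_s}_Y+(1-\alpha_t)\Delta_Y,\,\alpha_t\bN^{\alpha_s,Y}\big)\to Z$, with $\mld(Z\ni z,\mathcal G)\geq\delta\big(t,\delta(s,\epsilon)\big)$ (this last quantity equals the fraction in the statement by a direct computation from \eqref{formula}). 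It remains to see that $(Z,\Gamma^{\beta}_Z,\bN^{\beta,Z})$ has better singularities than $\mathcal G$. By Lemma \ref{trans}, $(Z,\Gamma^{\beta}_Z,\bN^{\beta,Z})$ --- the generalized pair given by adjunction for $f:(X,\Gamma^{\beta},\bN^{\beta})\to Z$ --- is also given by adjunction for $h:(Y,\Gamma^{\beta}_Y,\bN^{\beta,Y})\to Z$, where $(Y,\Gamma^{\beta}_Y,\bN^{\beta,Y})$ is given by adjunction for $g:(X,\Gamma^{\beta},\bN^{\beta})\to Y$. Now apply Lemma \ref{better} to $g$, with the generalized pairs $(X,\Gamma^{\alpha_s},\bN^{\alpha_s})$ and $(X,\Delta,\mathbf 0)$ and the coefficient $\alpha_t$: using $\Gamma^{\beta}=\alpha_t\Gamma^{\alpha_s}+(1-\alpha_t)\Delta$, $\bN^{\beta}=\alpha_t\bN^{\alpha_s}$, together with Lemma \ref{toricadjunction} (which identifies the generalized pair from adjunction for $g:(X,\Delta)\to Y$ with $(Y,\Delta_Y,\mathbf 0)$), we obtain that $(Y,\Gamma^{\beta}_Y,\bN^{\beta,Y})$ has better singularities than $\big(Y,\,\alpha_t\Gamma^{\alpha_s}_Y+(1-\alpha_t)\Delta_Y,\,\alpha_t\bN^{\alpha_s,Y}\big)$. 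Pushing this comparison through $h$ with Lemma \ref{betterbase} shows $(Z,\Gamma^{\beta}_Z,\bN^{\beta,Z})$ has better singularities than $\mathcal G$, whence $\mld(Z\ni z,\Gamma^{\beta}_Z,\bN^{\beta,Z})\geq\mld(Z\ni z,\mathcal G)\geq\delta\big(t,\delta(s,\epsilon)\big)$, as required.

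The delicate point is the comparison step: adjunction is not linear in the boundary, so the generalized pair attached on $Z$ to $(X,\Gamma^{\beta},\bN^{\beta})$ is not literally $\mathcal G$, and their relation persists only up to ``better singularities'' --- which is exactly what Lemmas \ref{better}, \ref{betterbase} and \ref{toricadjunction} are designed to control. A routine but unavoidable bookkeeping task throughout is to check, at each use of Theorem \ref{generalized} and of Lemmas \ref{trans}, \ref{better} and \ref{betterbase}, the relevant $\sim_{\R}0/(\cdot)$ relations and glc-over-the-generic-point hypotheses; these all hold because $\Gamma^{\alpha_s}$ and $\Gamma^{\beta}$ are convex combinations of $B\geq 0$ with the toric boundary $\Delta$.
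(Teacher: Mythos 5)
Your proof is correct and follows essentially the same route as the paper's: apply the inductive hypothesis to $g$ and then to $h$, use Lemma \ref{trans} to identify the two-stage adjunction with the single-stage one, and invoke Lemmas \ref{better}, \ref{toricadjunction} and \ref{betterbase} to compare via ``better singularities.'' Your Step~1 is in fact a touch more careful than the paper's phrasing: the paper directly asserts $\mld(Y/Z\ni z,\Gamma^{\lambda}_Y,\bN^{\lambda,Y})\geq\delta(s,\epsilon)$, whereas you spell out the pointwise argument --- pass to the generic point $y$ of $\cent_Y E'$, check $\mld(X/Y\ni y,B,\bM)\geq\epsilon$ via the factorisation $f=h\circ g$, and apply Theorem \ref{generalized} at $y$ --- which is the correct way to convert the conclusion $\mld(Y\ni y,\cdot)\geq\delta(s,\epsilon)$ of the theorem into an $\mld$ statement over $z$.
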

\begin{proof}
By assumption, Theorem \ref{generalized} holds for both $h$ and $g$ in the following sense. Let 
$$\Gamma^{\lambda}=\lambda B+(1-\lambda) \Delta ~\text{ and } ~\bN^{\lambda}=\lambda \bM \quad \text{ where } \lambda=1/s!$$ and let $(Y,\Gamma^{\lambda}_Y,\bN^{\lambda,Y})/U$ be the g-pair given by adjunction for $g:(X,\Gamma^{\lambda},\bN^{\lambda})\rightarrow Y$. 
Then 
$$\mld(Y/Z\ni z,\Gamma^{\lambda}_Y,\bN^{\lambda,Y})\geq \delta(s,\epsilon).$$ On the other hand, let
$$\Omega_{Y}^\gamma=\gamma \Gamma^{\lambda}_Y+(1-\gamma)\Delta_Y ~\text{and}~ \bL^{\gamma,Y}=\gamma \bN^{\lambda,Y}\quad \text{where }\gamma=1/t!$$
and $\Delta_Y$ is the toric boundary divisor of $Y$. Let $(Z,\Omega_{Z}^\gamma,\bL^{\gamma,Z})/U$ be the g-pair given by the adjunction for $h:(Y,\Omega_Y^{\gamma},\bL^{\gamma,Y})\rightarrow Z$. Then 
\begin{align}\label{tt}
\mld(Z\ni z,\Omega_{Z}^\gamma,\bL^{\gamma,Z})\geq \delta\big(t,\delta(s,\epsilon)\big).
\end{align}

Now let
  $$
  \Gamma^{\beta}=\beta B+(1-\beta)\Delta ~ \text{ and } ~\bN^{\beta}=\beta \bM \quad \text{ where } \beta=1/(\lambda\gamma)=1/(s!t!).
  $$
By construction, we have
$$\Gamma^{\beta}=\gamma\Gamma^{\lambda}+(1-\gamma)\Delta ~ \text{ and } ~ \bN^{\beta}=\gamma \bN^{\lambda}.$$
Let $(Y, \Gamma^{\beta}_Y,\bN^{\beta,Y})/U$ be the g-pair given by adjunction for $g:(X,\Gamma^{\beta},\bN^{\beta})\rightarrow Y$.
 and let $(Z, \Gamma^{\beta}_Z,\bN^{\beta,Z})$ be the g-pair given by adjunction for $f:(X,\Gamma^{\beta}+\bN^{\beta})\rightarrow Z$. 
By Lemma \ref{trans},  $(Z, \Gamma^{\beta}_Z,\bN^{\beta,Z})$ is also the g-pair given by adjunction for $h:(Y, \Gamma^{\beta}_Y,\bN^{\beta,Y})\rightarrow Z$.

Since 
$$\Gamma^{\beta}=\gamma\Gamma^{\lambda}+(1-\gamma)\Delta,~\bN^{\beta}=\gamma \bN^{\lambda}$$
and
$$\Omega_{Y}^\gamma=\gamma \Gamma^{\lambda}_Y+(1-\gamma)\Delta_Y, ~ \bL^{\gamma,Y}=\gamma \bN^{\lambda,Y},$$
by Lemma \ref{better} and Lemma \ref{toricadjunction}, the g-pair $(Y, \Gamma^{\beta}_Y,\bN^{\beta,Y})$ has better singularities than $(Y,\Omega_Y^{\gamma},\bL^{\gamma,Y})$, which implies that $(Z, \Gamma^{\beta}_Z,\bN^{\beta,Z})$ has better singularities than $(Z,\Omega_{Z}^\gamma,\bL^{\gamma,Z})$ by Lemma \ref{betterbase}. So by \eqref{tt} we have 
$$\mld(Z\ni z, \Gamma^{\beta}_Z,\bN^{\beta,Z})\geq \delta\big(t,\delta(s,\epsilon)\big).$$
\end{proof}

\begin{lem}\label{fano}
Let $F$ be a $\Q$-factorial complete toric variety given by $(N,\Sigma)$ with $\rho(F)=1$ and $\dim F=r\geq 2$. Then 

(1) its fan $\Sigma$ has exactly $r+1$ rays generated by primitive elements $v_i$, $i=1,\ldots,r+1$, and there exist positive integers $q_i$ such that $\sum_{i=1}^{r+1} q_iv_i=0$;

(2) let $E_i$, $i=1,\cdots,r+1$, be the prime divisor over $F$ corresponding to $-v_i$, then
$$a(E_i,F,0)=\frac{q_1+\cdots+\widehat{q_i}+\cdots +q_{r+1}}{q_i},$$
where the hat indicates that we omit that term;

(3) let $\pi:F'\to F$ be an extremal toric divisorial contraction with the exceptional divisor $E_i$ for some $i$, then there exists a toric contraction $g:F'\to G$ such that $\dim G=r-1$.
\end{lem}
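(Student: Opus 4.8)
The plan is to write down the fan of $F$ explicitly — the content of part~(1) being that $F$ is a fake weighted projective space — and then compute everything from it. Since $F$ is $\Q$-factorial, $\Sigma$ is simplicial; since $F$ is complete it has no torus factor and its rays span $N_{\R}$, so $\#\Sigma(1)=\dim F+\rho(F)=r+1$ (see \cite{CLS11}). Write $v_1,\dots,v_{r+1}$ for the primitive ray generators. Every maximal cone of $\Sigma$ is simplicial of dimension $r$, hence equals $\sigma_i:=\mathrm{Cone}(v_j:j\neq i)$ for some $i$, and each $\sigma_i$ must occur: otherwise $-v_i$, which lies in no other maximal cone (a pointed cone containing $v_i$ cannot contain $-v_i$), would lie in no cone of $\Sigma$, contradicting completeness. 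In particular any $r$ of the $v_j$ are linearly independent, so a relation $\sum_j q_jv_j=0$ exists, is unique up to scale, and has every $q_j\neq 0$; moreover all $q_j$ share one sign, since otherwise $w:=\sum_{q_j>0}q_jv_j=\sum_{q_j<0}(-q_j)v_j\neq 0$ would lie in the relative interiors of the two cones $\mathrm{Cone}(v_j:q_j>0)$ and $\mathrm{Cone}(v_j:q_j<0)$, each of which is a face of some $\sigma_l$, forcing these cones to coincide although their ray sets are nonempty and disjoint. Rescaling the $q_j$ to positive integers gives (1). For (2), the relation gives $-v_i=\sum_{j\neq i}(q_j/q_i)v_j$, a strictly positive combination of the generators of $\sigma_i$, so the primitive vector $-v_i$ lies in the relative interior of the maximal cone $\sigma_i$; since the log discrepancy $a(E_u,F,0)$ of the toric divisor over $F$ attached to a primitive $u\in|\Sigma|$ equals the sum of the coefficients of $u$ expressed in the generators of the minimal cone of $\Sigma$ containing $u$, we get $a(E_i,F,0)=\sum_{j\neq i}q_j/q_i$.

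For part~(3), note first that since $\rho(F'/F)=1$ the toric birational morphism $\pi$ adds exactly one ray to $\Sigma$, necessarily $\R_{\geq 0}(-v_i)$ — the one to which $E_i$ corresponds — and it subdivides only the cone $\sigma_i$; the unique $\Q$-factorial such refinement is the star subdivision $\Sigma'$ of $\Sigma$ at $\R_{\geq 0}(-v_i)$, whose maximal cones are the $\sigma_l$ with $l\neq i$ together with $\mathrm{Cone}(-v_i,\,v_j:j\notin\{i,k\})$ for $k\neq i$. Put $N_0=\Z v_i$, which is saturated in $N$ because $v_i$ is primitive, and let $\phi\colon N\twoheadrightarrow N_G:=N/N_0\cong\Z^{\,r-1}$ be the quotient, so $\phi(v_i)=\phi(-v_i)=0$. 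A direct check shows $\phi_{\R}$ maps each maximal cone of $\Sigma'$ onto one of the $r$ cones $C_l:=\mathrm{Cone}(\phi(v_j):j\notin\{i,l\})$, $l\neq i$. Since $\sum_{j\neq i}q_j\phi(v_j)=0$ with all $q_j>0$ and the $r$ vectors $\phi(v_j)$ $(j\neq i)$ span $(N_G)_{\R}$, the same argument as in part~(1) — the positive relation lets one rewrite any vector with nonnegative coefficients, at least one of which vanishes — shows that the $C_l$ and their faces form a complete fan $\overline{\Sigma}$, namely the fan of a fake weighted projective space of dimension $r-1$. Hence $\phi$ is compatible with $\Sigma'$ and $\overline{\Sigma}$ and induces a surjective toric morphism $g\colon F'\to G$ onto the toric variety $G$ of $(N_G,\overline{\Sigma})$; as $F'$ is complete, $g$ is proper. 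By Lemma~\ref{contraction-over-torus} the restriction of $g$ over the torus $T_G$ is $\PP^1\times T_G$, where $\PP^1$ is the toric variety of $\Z v_i$ with rays $\R_{\geq 0}v_i$ and $\R_{\geq 0}(-v_i)$; in particular the general fibre of $g$ is irreducible, so $g$ is a contraction, and $\dim G=\dim F'-1=r-1$.

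I expect the substantive difficulty to be entirely in part~(3): checking that the image of the star-subdivided fan under $\phi_{\R}$ is genuinely a fan — dealt with above by recognising it as the fan of part~(1) one dimension lower — and verifying that the induced toric map is an honest contraction of relative dimension exactly one, rather than a merely rational or non-equidimensional map. Parts~(1) and~(2), by contrast, are routine bookkeeping with standard facts about complete simplicial fans and toric log discrepancies.
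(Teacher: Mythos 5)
Your proof is correct and follows essentially the same route as the paper: part (1) by the standard count $\#\Sigma(1)=r+\rho(F)$ and a sign analysis on the linear relation, part (2) by the interior-point computation for $-v_i\in\sigma_i$, and part (3) by taking the star subdivision and pushing it forward along $\phi\colon N\to N/\Z v_i$. Where you diverge, you add value: the paper proves (1) by citing \cite[Lemma~3.3]{BC21} and proves that $\Sigma_G$ is a fan by citing \cite[Exercise~3.2.7]{CLS11}, whereas you argue both directly, and in (3) you identify the image fan as that of a fake weighted projective space of dimension $r-1$ (i.e.\ the same structure as in (1), one dimension down), which both gives the fan axioms for free and makes the relative dimension transparent. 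Your explicit verification via Lemma~\ref{contraction-over-torus} that $g$ restricts to $\PP^1\times T_G$ over the torus --- so that the general fibre is irreducible and $g$ is an honest contraction --- is a welcome clarification of a point the paper leaves implicit.
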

\begin{rem}
Since $\rho(F')=2$, $F'$ has only two extremal rays. One corresponds to $F'\to F$ and the other corresponds to $F'\to G$.
\end{rem}
\begin{proof}

(1) The first assertion is showed in the proof of \cite[Lemma 3.3]{BC21}.

\medskip
(2) Without loss of generality, we may suppose that $i=r+1$. Let $\Delta_F$ be the toric boundary divisor of $F$, then $a(D,F,\Delta_F)=0$ for any toric prime divisor $D$ over $F$, which implies that $a(D,F,0)$ is equal to the coefficient of $D$ in the pullback of $\Delta_F$ (denoted by $\mult_{D} \Delta_F$).

Since $-v_{r+1}$ is in the interior of the cone $\sigma$ generated by $v_1,\cdots,v_{r}$, the center of $E_{r+1}$ is contained in the affine chart $U_{\sigma}$. On the chart $U_{\sigma}$, $\Delta_F$ is determined by $m\in N^*\otimes \Q$ with $\langle m,v_i\rangle=1$ for $i=1,\cdots,r$. Then
$$\mult_{E_{r+1}} \Delta_F=\langle m,-v_{r+1}\rangle
=\frac{\langle m,q_1v_1+\cdots +q_rv_r\rangle}{q_{r+1}}=\frac{q_1+\cdots +q_r}{q_{r+1}}.$$

(3) Without loss of generality, we may suppose that $i=r+1$. The toric variety $F'$ is given by $(N,\Sigma')$, where $\Sigma'$ is the star subdivision of $\Sigma$ along $-v_{r+1}$, more precisely, 
$$\Sigma'=(\Sigma\setminus \{\sigma\})\cup \Sigma^*(\sigma)$$
where $\sigma$ is the cone generated by $v_1,\cdots,v_r$ and $\Sigma^*(\sigma)$ is the set of all cones generated by subsets of $\{-v_{r+1},v_1,\cdots,v_r\}$ not containing $\{v_1,\cdots,v_r\}$.

Let $\phi:N\to N/(\Z v_{r+1}):=N_G$ be the quotient map and let
$$\Sigma_{G}=\{\phi_{\R}(\tau)\subset (N_G)_{\R}\mid \tau \in \Sigma \text{ and } -v_{r+1}\in \tau\}.$$
Then $\Sigma_G$ is a fan in $(N_{G})_{\R}$ (Exercise 3.2.7 in \cite{CLS11}).
Let $G$ be the toric variety given by $(N_G,\Sigma_G)$. We claim that $\phi$ is compatible with $\Sigma$ and $\Sigma_G$, that is, for any $\tau\in \Sigma$, $\phi_{\R}(\tau)$ is contained in some cone in $\Sigma_G$. Indeed, the claim holds obviously when $-v_{r+1}\in \tau$, so we may assume that $-v_{r+1}\notin \tau$. Then $\tau$ is generated by a subset $S$ of $\{v_1,\cdots,v_{r+1}\}$ not containing $\{v_1,\cdots,v_r\}$. Let $\tau'$ be the cone generated by $S'$ where
$$S' =
\begin{cases}
S\cup\{-v_{r+1}\},  & \text{if $v_{r+1}\notin S$,} \\
(S\setminus\{v_{r+1}\})\cup\{-v_{r+1}\}, & \text{if $v_{r+1}\in S$.}
\end{cases}
$$ 
Then $\phi_{\R}(\tau')\in \Sigma_G$ and $\phi_{\R}(\tau)=\phi_{\R}(\tau')$. Therefore the claim holds and then $\phi:N\to N_G$ determines a toric contraction from $F$ to $G$.
\end{proof}

\begin{lem}\label{diagram}
Let $f:X\rightarrow Z$ be a toric Mori fiber space of relative dimension $r\geq 2$ where $X$ is $\Q$-factorial. Then there is a commutative diagram 
\begin{equation*}
\begin{aligned}
\xymatrix{
	W \ar[r]^g \ar[d]^{\pi} & Y \ar[r]^h & Z\\
	X \ar[urr]_f
} 
\end{aligned}
\end{equation*}
such that
\begin{itemize}
\item $\pi,h,g$ are toric contractions,
\item $\pi:W\rightarrow X$ is an extremal toric divisorial contraction with the exceptional divisor $E$ satisfying $a(E,X,0)\leq r$, and
\item $\dim W-1=\dim Y>\dim Z$.
\end{itemize} 
\end{lem}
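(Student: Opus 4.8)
The plan is to work entirely on the level of fans, exploiting the fact that a toric Mori fiber space $f : X \to Z$ of relative dimension $r$ is given by a surjective $\Z$-linear map $\phi : N_X \to N_Z$ whose generic fiber is a $\Q$-factorial complete toric variety $F$ of Picard number one (Lemma \ref{contraction-over-torus}). First I would pass to this generic fiber: setting $N_0 = \ker(\phi)$, the subfan $\Sigma_0 = \{\sigma \in \Sigma_X \mid \sigma \subset (N_0)_{\R}\}$ is the fan of $F$, a complete $\Q$-factorial toric variety of dimension $r \geq 2$ with $\rho(F) = 1$. By Lemma \ref{fano}(1), $\Sigma_0$ has exactly $r+1$ rays generated by primitive $v_1, \dots, v_{r+1} \in N_0$ with $\sum q_i v_i = 0$ for positive integers $q_i$, and its maximal cones are the $r$-element subsets. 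Among the divisors $E_i$ over $F$ corresponding to $-v_i$, Lemma \ref{fano}(2) gives $a(E_i, F, 0) = (q_1 + \cdots + \widehat{q_i} + \cdots + q_{r+1})/q_i$, and since $\sum q_i a(E_i,F,0)$-type averaging (or simply: the minimum over $i$ of these is at most $\frac{1}{r+1}\sum_i a(E_i,F,0)$-style counting argument — really, picking $i$ with $q_i$ maximal) shows that for the index $i_0$ with $q_{i_0}$ maximal we get $a(E_{i_0}, F, 0) \leq r$. This is the choice of $E$.

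Next I would globalize: the ray $\R_{\geq 0}\cdot(-v_{i_0}) \subset (N_0)_{\R} \subset (N_X)_{\R}$ may or may not already be a ray of $\Sigma_X$; in either case, let $\Sigma_W$ be the star subdivision of $\Sigma_X$ along $-v_{i_0}$ and let $\pi : W \to X$ be the induced toric morphism. Because $-v_{i_0}$ lies in the relative interior of the cone $\sigma = \langle v_1, \dots, \widehat{v_{i_0}}, \dots, v_{r+1}\rangle$ restricted appropriately — more precisely it lies in the interior of a single maximal cone of $F$, hence in the relative interior of a face-fiber of exactly one cone of $\Sigma_X$ — the star subdivision is an extremal divisorial contraction (it introduces exactly one new ray), with exceptional divisor $E$ the toric divisor of the new ray; its restriction to $F$ is $E_{i_0}$, and since the discrepancy computation is governed entirely by the local cone structure inside $(N_0)_\R$ (the $m \in N_X^* \otimes \Q$ computing the pulled-back toric boundary can be chosen so that its pairing with $-v_{i_0}$ is exactly the fiber-direction computation of Lemma \ref{fano}(2)), we get $a(E, X, 0) = a(E_{i_0}, F, 0) \leq r$.

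Finally I would produce the factorization $W \xrightarrow{g} Y \xrightarrow{h} Z$. The key point is that $W \to Z$ now has a toric divisor $E$ whose fiber direction $-v_{i_0}$ spans a line $\R \cdot v_{i_0}$ on which $\phi$ vanishes; I apply the construction in Lemma \ref{fano}(3) fiberwise. Concretely, form the quotient $\psi : N_X \to N_X/(\Z v_{i_0}) =: N_Y$, and define $\Sigma_Y$ as the images under $\psi_\R$ of the cones of $\Sigma_W$ that contain the ray $\R_{\geq 0}\cdot(-v_{i_0})$; the argument of Lemma \ref{fano}(3) — replacing occurrences of $v_{i_0}$ by $-v_{i_0}$ in the generating sets — shows this is a fan and that $\psi$ is compatible with $\Sigma_W$ and $\Sigma_Y$, giving a toric contraction $g : W \to Y$. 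Since $v_{i_0} \in \ker\phi$, the map $\phi$ factors through $\psi$, inducing $h : Y \to Z$ with $h \circ g = f \circ \pi$; this gives the commutative diagram. For the dimensions: $\dim Y = \dim W - 1 = \dim X$ because $W \to X$ is birational, and $g$ has relative dimension $r-1 \geq 1$ while $h$ has relative dimension $1$ (the generic fiber of $g$ is the $(r-1)$-dimensional toric variety $G$ of Lemma \ref{fano}(3)), so $\dim Y > \dim Z$ as required.

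The main obstacle I anticipate is the compatibility and well-definedness bookkeeping in the last paragraph: one must check that the star subdivision along $-v_{i_0}$ genuinely yields an \emph{extremal} divisorial contraction (that $\rho(W/X) = 1$, i.e. exactly one ray is added and no cone of $\Sigma_X$ other than the one containing $-v_{i_0}$ in its interior is disturbed), and that the quotient fan $\Sigma_Y$ is honestly a fan whose cones cover $\psi_\R(\mathrm{supp}\,\Sigma_W)$ — the subtlety being that cones of $\Sigma_W$ \emph{not} containing $-v_{i_0}$ must still map into some cone of $\Sigma_Y$, which is exactly the face-swapping trick from the proof of Lemma \ref{fano}(3) but now needs to be run at the level of $\Sigma_X$ rather than just on the complete fan $F$. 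Verifying $a(E,X,0) \leq r$ also requires care that the functional $m$ computing the toric boundary pulls back correctly, but this reduces cleanly to the fiberwise statement already proved in Lemma \ref{fano}(2).
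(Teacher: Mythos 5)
Your proposal agrees with the paper up through the choice of the exceptional divisor $E$ (picking $i_0$ with $q_{i_0}$ maximal so that $a(E_{i_0},F,0)\le r$), the construction of $\pi\colon W\to X$ as a star subdivision along $-v_{i_0}$, and the discrepancy computation $a(E,X,0)=a(E_{i_0},F,0)\le r$. Where you genuinely diverge from the paper is in producing the factorization $W\to Y\to Z$, and this is also where there is a real gap.

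The paper does not attempt to write down $\Sigma_Y$ by hand. Instead it observes that $\rho(W/Z)=2$, so $W$ has exactly two extremal rays over $Z$; one is contracted by $\pi$, and one can take $g\colon W\to Y$ to be a $(-E)$-negative extremal contraction over $Z$ (which exists because $W$ is a toric Mori dream space over $Z$). It then restricts $g$ over the torus $T_Z$: by $\rho(F')=2$ the restriction cannot be an isomorphism, and by $(-E)$-negativity it cannot be $F'\times T_Z\to F\times T_Z$, so it must be $F'\times T_Z\to G\times T_Z$; this pins down $\dim Y=\dim W-1$. The MMP gives the existence of $g$ for free, and the explicit structure is only needed generically over $Z$, where Lemma~\ref{fano}(3) applies verbatim.

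By contrast, you try to build $Y$ explicitly: set $N_Y=N_X/(\Z v_{i_0})$ and take $\Sigma_Y$ to be the star fan of the ray $\R_{\ge0}(-v_{i_0})$ in $\Sigma_W$. The set $\Sigma_Y$ is indeed a fan by the cited CLS exercise, but you still must show that $\psi$ is \emph{compatible} with $\Sigma_W$ and $\Sigma_Y$, i.e.\ that for every cone $\tau\in\Sigma_W$ (including those not containing $-v_{i_0}$) the image $\psi_\R(\tau)$ lies in a single cone of $\Sigma_Y$. You flag this as the ``face-swapping trick run at the level of $\Sigma_X$,'' but that trick in Lemma~\ref{fano}(3) relies entirely on the rigid structure of a complete simplicial fan with exactly $r+1$ rays (every cone is generated by a subset of $\{v_1,\dots,v_{r+1}\}$); a cone of $\Sigma_W$ sitting over a non-torus stratum of $Z$ can have rays far outside $(N_0)_\R$, and there is no a priori reason that replacing $v_{i_0}$ by $-v_{i_0}$ in its generator set again yields a cone of $\Sigma_W$. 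You have shown that the support of $\Sigma_Y$ contains $\psi_\R(|\Sigma_W|)$ (since $|\Sigma_W|$ is $\R v_{i_0}$-invariant and one can push any point off to infinity in the $-v_{i_0}$ direction), but that is weaker than cone-by-cone compatibility: one would still need to rule out a cone $\tau$ whose image $\psi_\R(\tau)$ crosses a wall of $\Sigma_Y$. This is the heart of the matter, not bookkeeping, and it is exactly what the paper avoids by appealing to the MMP.

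There are also two small errors in your final dimension count that you should correct: since $W\to X$ is birational you have $\dim Y=\dim W-1=\dim X-1$ (not $\dim X$), and the generic fibre of $g$ is the one-dimensional fibre of $F'\to G$, so $g$ has relative dimension $1$ and $h$ has relative dimension $r-1$ (you have them reversed). Neither affects the inequality $\dim W-1=\dim Y>\dim Z$ for $r\ge2$, but they should be fixed.
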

\begin{proof}
By Lemma \ref{contraction-over-torus}, over the torus $T_Z$ in $Z$, $f^{-1}(T_Z)$ is isomorphic to $F\times T_Z$, where $F$ is a general fiber of $f$. Since $f:X\to Z$ is a Mori fiber space, $F$ is a Fano variety with $\rho(F)=1$. Moreover, $F$ is $\Q$-factorial, as by Lemma \ref{contraction-over-torus} its fan $\Sigma_F$ is a sub-fan of the fan $\Sigma_X$ of $X$ which is simplicial. By Lemma \ref{fano} (1), the fan $\Sigma_F$ has exactly $r+1$ rays generated by primitive elements $v_i$, $i=1,\ldots,r+1$, and there exist positive integers $q_i$ such that $\sum_{i=1}^{r+1} q_iv_i=0$. Pick $e$ such that $q_e\geq q_i$ for any $i=1,\cdots,r+1$ and denote by $E_F$ the toric prime divisor over $F$ corresponding to $-v_e$. Extracting $E_F$ gives an extremal contraction $F'\to F$. By Lemma \ref{fano} (2), there is a toric contraction $F'\to G$ with $\dim G=r-1$. 

The closure $E$ of the exceptional divisor $E_F\times T_Z$ of $F'\times T_Z\to F\times T_Z$ is a toric divisor over $X$, so it determines an extremal toric divisorial contraction $\pi:W\rightarrow X$ with the exceptional divisor $E$. Then $\rho(W/Z)=2$. Over $T_Z$, the two contractions $W\to X$ and $F'\times T_Z\to F\times T_Z$ coincides. By Lemma \ref{fano} (2), we have
$$a(E,X,0)=a(E_F\times T_Z,F\times T_Z,0)\leq r.$$

Let $g:W\to Y$ be a $(-E)$-negative toric extremal contraction over $Z$. Then $\rho(Y/Z)=1$. Over $T_Z$, the restriction $g|_{T_Z}:F'\times T_Z\to Y|_{T_Z}$ is either an isomorphism or a $(-E_F\times T_Z)$-negative toric extremal contraction over $T_Z$. But the former case is impossible because $\rho(F')=2$ and $\rho(Y/Z)=1$. Note that $F'\times T_Z$ has only two extremal rays over $T_Z$. One corresponds to $F'\times T_Z\to F\times T_Z$ and the other corresponds to $F'\times T_Z\to G\times T_Z$. So $g|_{T_Z}$ coincides with one of them. It is impossible that $g|_{T_Z}$ coincides with $F'\times T_Z\to F\times T_Z$ because  $-E_F\times T_Z$ is ample over $F\times T_Z$. So $g|_{T_Z}$ coincides with $F'\times T_Z\to G\times T_Z$, which implies that $\dim Y=\dim W-1$.
\end{proof}

\begin{lem}\label{MFS}
Assume that Theorem \ref{generalized} holds in relative dimension $\leq r-1$. Then Theorem \ref{generalized} holds in relative dimension $r$ when $f:X\rightarrow Z$ is a toric Mori fiber space and $X$ is $\Q$-factorial.
\end{lem}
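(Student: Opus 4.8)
The plan is to use Lemma \ref{diagram} to split the relative dimension $r$ problem into a relative dimension one problem sitting over a relative dimension $r-1$ problem, carefully tracking how small the averaging coefficient with the toric boundary must be. If $r=1$, then $\alpha=1$, so $\Gamma^{\alpha}=B$ and $\bN^{\alpha}=\bM$; since $f$ is a toric contraction $X$ is of Fano type over $Z$, and Lemma \ref{dim1} gives $\mld(Z\ni z,\Gamma^{\alpha}_Z,\bN^{\alpha,Z})\ge\delta(1,\epsilon)$ directly. So assume $r\ge2$. By Lemma \ref{diagram} we obtain a commutative diagram $W\xrightarrow{g}Y\xrightarrow{h}Z$ together with a morphism $\pi\colon W\to X$, where $\pi$ is an extremal toric divisorial contraction with exceptional divisor $E$ satisfying $a(E,X,0)\le r$ and $\dim W-1=\dim Y>\dim Z$; since $\pi$ is birational, $\dim W=\dim X$, so $g$ has relative dimension $1$ and $h$ has relative dimension $r-1$.

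First I would pull the pair back crepantly: writing $K_W+B_W+\bM_W=\pi^{*}(K_X+B+\bM_X)$, the triple $(W,B_W,\bM)$ is a g-sub-pair with $K_W+B_W+\bM_W\sim_{\R}0/Z$; since $\pi$ is birational and crepant, the g-pair given by adjunction over $Z$ is unchanged if one replaces $(X,B,\bM)$, or any of the $\sim_{\R}0/Z$ modifications of it used below, by its crepant pullback to $W$. The exceptional divisor $E$ is horizontal over $Z$: over the torus $T_Z$ the morphism $\pi$ is the contraction $F'\times T_Z\to F\times T_Z$ of Lemma \ref{diagram} and $E$ restricts to $E_F\times T_Z$, whose center on $F\times T_Z$ is a torus fixed point of $F$ times $T_Z$, hence dominates $Z$. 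Thus the prime divisors over $W$ with center $\overline z$ on $Z$ are exactly those over $X$ with center $\overline z$ on $Z$, and crepantness gives $\mld(W/Z\ni z,B_W,\bM)=\mld(X/Z\ni z,B,\bM)\ge\epsilon$; moreover $\mult_E B_W=1-a(E,X,B,\bM)\ge1-a(E,X,0)\ge1-r$, so $B_W$ is effective away from $E$ and has coefficient at least $1-r$ along $E$. This bounded non-effectivity is the only feature separating $(W,B_W,\bM)$ from a genuine g-pair, and it is what forces the coefficient $1/r!$.

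The heart of the argument is to follow the proof of Lemma \ref{complex} for $W\xrightarrow{g}Y\xrightarrow{h}Z$ with $s=1$, $t=r-1$ and input $(W,B_W,\bM)$. Adjunction for $g\colon(W,B_W,\bM)\to Y$ gives a g-(sub-)pair $(Y,B_Y,\bM^Y)$ with $K_Y+B_Y+\bM^Y_Y\sim_{\R}0/Z$; if $D$ is a prime divisor over $Y$ with center $\overline z$ on $Z$, then its center $\overline w$ on $Y$ dominates $\overline z$, so $\mld(W/Y\ni w,B_W,\bM)\ge\mld(W/Z\ni z,B_W,\bM)\ge\epsilon$, and Lemma \ref{dim1} applied to $g$ — valid for this sub-pair, since $W$ is of Fano type over $Y$ and the proof of Lemma \ref{dim1} uses only this together with the $\mld$ lower bound — gives $a(D,Y,B_Y,\bM^Y)\ge\delta(1,\epsilon)$, hence $\mld(Y/Z\ni z,B_Y,\bM^Y)\ge\delta(1,\epsilon)=\epsilon^{2}/2$. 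Applying the inductive hypothesis in relative dimension $r-1$ to $h\colon(Y,B_Y,\bM^Y)\to Z$ with averaging coefficient $\gamma=1/(r-1)!$ and combining with Lemmas \ref{trans}, \ref{better}, \ref{betterbase}, \ref{toricadjunction} exactly as in the proof of Lemma \ref{complex} yields, with $\Gamma^{\beta}=\beta B+(1-\beta)\Delta$, $\bN^{\beta}=\beta\bM$, $\beta=1/(r-1)!$,
\[
\mld(Z\ni z,\Gamma^{\beta}_Z,\bN^{\beta,Z})\ \ge\ \delta\bigl(r-1,\delta(1,\epsilon)\bigr),
\]
$(Z,\Gamma^{\beta}_Z,\bN^{\beta,Z})$ being the g-pair given by adjunction for $f\colon(X,\Gamma^{\beta},\bN^{\beta})\to Z$; the bound $a(E,X,0)\le r$ enters precisely to keep the boundaries produced at each of these averaging steps (with coefficient $\ge\beta=1/(r-1)!$, and $1/r$ in the last step) effective. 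It then remains to descend from $\beta=1/(r-1)!$ to $\alpha=1/r!$. On $X$ one has $\Gamma^{\alpha}=\tfrac1r\Gamma^{\beta}+(1-\tfrac1r)\Delta$ and $\bN^{\alpha}=\tfrac1r\bN^{\beta}$, so by Lemmas \ref{better}, \ref{betterbase}, \ref{toricadjunction} (using the last to identify the adjunction of $(X,\Delta)\to Z$ with $(Z,\Delta_Z,\mathbf{0})$) the g-pair $(Z,\Gamma^{\alpha}_Z,\bN^{\alpha,Z})$ has better singularities than $\tfrac1r(Z,\Gamma^{\beta}_Z,\bN^{\beta,Z})+(1-\tfrac1r)(Z,\Delta_Z,\mathbf{0})$; since $a(D,Z,\Delta_Z)\ge0$ for every prime divisor $D$ over $Z$, we conclude $\mld(Z\ni z,\Gamma^{\alpha}_Z,\bN^{\alpha,Z})\ge\tfrac1r\,\delta\bigl(r-1,\delta(1,\epsilon)\bigr)$, and substituting $\delta(1,\epsilon)=\epsilon^{2}/2$ into \eqref{formula} shows $\tfrac1r\,\delta\bigl(r-1,\delta(1,\epsilon)\bigr)=r^{2^{r}-1}\,\delta(r,\epsilon)\ge\delta(r,\epsilon)$, as required.

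The main obstacle is the bounded non-effectivity of the crepant pullback along $E$. It forces one to apply Lemma \ref{dim1}, the inductive case of Theorem \ref{generalized}, and the ``better singularities'' lemmas to g-sub-pairs — legitimate because those arguments use only lower bounds on minimal log discrepancies over points together with the Fano-type property — and, more importantly, it forces the averaging coefficient down to $1/r!$, the needed smallness being supplied by $a(E,X,0)\le r$ from Lemma \ref{diagram}. This is the mechanism by which the factor $r!$, and hence the exponent $2^{r}$ upon iterating the induction, appears in $\delta(r,\epsilon)$; everything else (identifying the composed adjunction and checking the inequality $r^{2^{r}}\ge r$) is routine bookkeeping.
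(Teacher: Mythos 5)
Your reduction to $r\ge 2$ via Lemma \ref{dim1}, the use of Lemma \ref{diagram} to produce $W\to Y\to Z$ with $s=1,t=r-1$, and the observation that $\mult_E B_W\ge 1-r$ are all in agreement with the paper. But there is a genuine gap at the ``heart of the argument.'' You apply Lemma \ref{dim1} to the g-\emph{sub}-pair $(W,B_W,\bM)$ over $Y$ and, afterwards, the inductive case of Theorem \ref{generalized} to the g-\emph{sub}-pair $(Y,B_Y,\bM^Y)$ over $Z$, justified by the assertion that those proofs ``use only lower bounds on minimal log discrepancies over points together with the Fano-type property.'' That assertion is false. The proof of Lemma \ref{dim1} reduces to \cite[Theorem 1.4]{Ch22}, whose hypothesis is an honest pair $(X,B)$ with $B\ge 0$; the reduction produces $\Delta=\pi_*\Delta'$ with $\pi_*\Delta'=(1-u)B+u\pi_*C'+\pi_*L'$, and effectiveness of $\Delta$ depends on $B\ge 0$. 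Likewise, Theorem \ref{generalized} is only available by induction for g-\emph{pairs}, i.e.\ effective boundary. Since $B_W$ has coefficient as negative as $1-r$ along the horizontal divisor $E$, and consequently $B_Y$ is a priori only a sub-boundary, neither application is covered, and you have not supplied the extra argument needed to extend either statement to sub-pairs.

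The paper resolves exactly this by averaging \emph{before} invoking the rank-one and inductive results: it sets $\Gamma^{\theta}_W=\theta B_W+(1-\theta)\Delta_W$ with $\theta=1/r$, which is effective precisely because $a(E,X,0)\le r$, at the cost of weakening the mld bound to $\epsilon/r$; only then does it apply Lemma \ref{complex} (hence Lemma \ref{dim1} and the inductive hypothesis) to a genuine g-pair. This ordering is not cosmetic: your version claims
\[
\mld(Z\ni z,\Gamma^{\alpha}_Z,\bN^{\alpha,Z})\ \ge\ \tfrac1r\,\delta\bigl(r-1,\delta(1,\epsilon)\bigr)\ =\ r^{2^{r}-1}\,\delta(r,\epsilon),
\]
a factor $r^{2^{r}-1}$ stronger than the paper's $\delta(r-1,\delta(1,\epsilon/r))=\delta(r,\epsilon)$, and that entire improvement comes from feeding $\epsilon$ rather than $\epsilon/r$ into Lemma \ref{dim1} --- i.e.\ from the unjustified application to a sub-pair. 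To repair the proof, perform the averaging on $W$ first (using $\mult_E B_W\ge 1-r$ to get $\Gamma^{\theta}_W\ge 0$ and $\mld(W/Z\ni z,\Gamma^{\theta}_W,\bN^{\theta})\ge\epsilon/r$), then apply Lemma \ref{complex} with $s=1$, $t=r-1$, and finally note that the resulting $\Omega^{\beta}_W$ is the crepant pullback of $\Gamma^{\alpha}$ on $X$ with $\alpha=\theta\beta=1/r!$.
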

\begin{proof}
By Lemma \ref{dim1}, we may suppose that the relative dimension $r\geq 2$. By taking a toric $\Q$-factorialisation, we can assume $X$ is $\Q$-factorial. By Lemma \ref{diagram}, there is a commutative diagram 
$$\xymatrix{
	W \ar[r]^g \ar[d]^{\pi} & Y \ar[r]^h & Z\\
	X \ar[urr]_f
}$$ 
satisfying the properties listed in that lemma. Let $\Delta_W$ be the the toric boundary divisor of $W$, then $K_W+\Delta_W=\pi^*(K_X+\Delta)$. Write $K_W+B_W+\bM_W=\pi^*(K_X+B+\bM_X)$. 
Let
$$\Gamma^{\theta}_W=\theta B_W+(1-\theta) \Delta_W ~\text{ and }~ \bN^{\theta}=\theta \bM, \quad \text{ where } \theta=1/r.$$
Since $a(E,X,0)\leq r$, the coefficient of $E$ in $B_W$ is bounded below by $1-r$.  Then $\Gamma^{\theta}_W\geq 0$ since the coefficient of $E$ in $\Delta_W$ is 1. 

By construction, $\mld(W/Z\ni z,\Gamma^{\theta}_W, \bN^{\theta})\geq \frac{\epsilon}{r}$. Applying Lemma \ref{complex} to $(W,\Gamma^{\theta}_W, \bN^{\theta})$ over $Z$ (taking $s=1$ and $t=r-1$ in the lemma), we deduce that if we let 
$$\Omega_W^{\beta}=\beta \Gamma^{\theta}_W +(1-\beta)\Delta_W ~\text{ and }~ \bL^{\beta}=\beta \bN^{\theta}  \quad \text{ where } \beta=1/(r-1)!,$$
and $(Z,\Omega_Z^{\beta},\bL^{\beta,Z})/U$ be the g-pair given by adjunction for $h\circ g: (W,\Omega_W^{\beta},\bL^{\beta})\rightarrow Z$, then
$$\mld(Z\ni z,\Omega_Z^{\beta},\bL^{\beta,Z})\geq \frac{(\epsilon/r)^{2^r}}{2^{2^{r}-1}\cdot\prod\limits_{i=1}^{r-1} i^{2^i}}=\delta(r,\epsilon).$$
It is easy check that 
$$\Omega_W^{\beta}=\alpha B_W +(1-\alpha)\Delta_W ~\text{ and }~ \bL^{\beta}=\alpha \bM  \quad \text{ where } \alpha=\theta\beta=1/r!.$$
Hence
$$K_W+\Omega_W^{\beta}+\bL^{\beta}_W=\pi^*(K_X+\Gamma^{\alpha}+\bN^{\alpha}_X),$$
where
$$\Gamma^{\alpha}=\alpha B+(1-\alpha)\Delta ~ \text{ and } ~\bN^{\alpha}=\alpha \bM.$$
Therefore $(Z,\Omega_Z^{\beta},\bL^{\beta,Z})/U$ is also the g-pair given by adjunction for $f:(X,\Gamma^{\alpha},\bN^{\alpha})\rightarrow Z$.  This finishes the proof of the lemma.
\end{proof}

\begin{proof}[Proof of Theorem \ref{generalized}]
By induction on relative dimension we may assume that the theorem holds in relative dimension $\leq r-1$. Taking a toric $\Q$-factorization of $X$ and running an MMP on $K_X$ over $Z$, we may assume that $X$ is $\Q$-factorial and it has a toric Mori fiber space structure $X\rightarrow Y/Z$. 

If $Y\rightarrow Z$ is birational, we can replace $Z$ by $Y$, then we are done by Lemma \ref{MFS}. Otherwise $\dim Y>\dim Z$. Denote $s=\dim X-\dim Y$ and $t=\dim Y-\dim Z$, then $r=s+t$. Applying Lemma \ref{complex}, we deduce that if we let
$$
  \Gamma^{\beta}=\beta B+(1-\beta)\Delta ~ \text{ and } ~\bN^{\beta}=\beta \bM \quad \text{ where } \beta=1/(s!t!)
$$
and $(Z, \Gamma^{\beta}_Z,\bN^{\beta,Z})/U$ be the g-pair given by adjunction for $f:(X,\Gamma^{\beta},\bN^{\beta})\rightarrow Z$, then 
\begin{align}\label{q}
\mld(Z\ni z, \Gamma^{\beta}_Z,\bN^{\beta,Z}) \geq \frac{\epsilon^{2^{r}}}{2^{2^{r}-1}\prod\limits_{i=1}^s i^{2^{i+t}}\cdot\prod\limits_{i=1}^t i^{2^i}}.
\end{align}

Let
  $$
  \Gamma^{\alpha}=\alpha B+(1-\alpha)\Delta ~ \text{ and } ~\bN^{\alpha}=\alpha \bM \quad \text{ where } \alpha=1/r!.
  $$
Then we have
 $$
  \Gamma^{\alpha}=\theta \Gamma^{\beta}+(1-\theta)\Delta ~ \text{ and } ~\bN^{\alpha}=\theta \bN^{\beta} \quad \text{ where } \theta=\alpha/\beta=(s!t!)/r!.
 $$
Let $(Z, \Gamma^{\alpha}_Z,\bN^{\alpha,Z})$ be the g-pair given by adjunction for $f:(X,\Gamma^{\alpha},\bN^{\alpha})\rightarrow Z$. By Lemma \ref{better} and Lemma \ref{toricadjunction},  $(Z, \Gamma^{\alpha}_Z,\bN^{\alpha,Z})$ has better singularities than
$$(Z, \theta \Gamma^{\beta}_Z +(1-\theta) \Delta_Z,\theta\bN^{\beta,Z}).$$
Hence by \eqref{q} we have
\begin{align*}
\mld(Z\ni z, \Gamma^{\alpha}_Z,\bN^{\alpha,Z})&\geq \frac{s!t!}{r!}\cdot\frac{\epsilon^{2^{r}}}{2^{2^{r}-1}\prod\limits_{i=1}^s i^{2^{i+t}}\cdot\prod\limits_{i=1}^t i^{2^i}}\\
& \geq \frac{\epsilon^{2^r}}{2^{2^r-1}\prod\limits_{i=1}^r i^{2^i}}=\delta(r,\epsilon).
\end{align*}
\end{proof}

\begin{proof}[Proof of Theorem \ref{thmtech}]
It is a special case of Theorem \ref{generalized}.
\end{proof}

\begin{proof}[Proof of Theorem \ref{thm3}]
By Lemma \ref{lalala}, shrinking $Z$ around $z$, we may suppose that $(X,B)$ is lc. Since $(X,B)$ is a toric lc pair, we have $B\leq \Delta$, where $\Delta$ is the toric boundary divisor of $X$.
Let
  $$
  \Gamma^{\alpha}=\alpha B^++(1-\alpha)\Delta \quad \text{ where } \alpha=1/r!.
  $$
Then $\Gamma^{\alpha}\geq B$. Let $(Z, \Gamma^{\alpha}_Z,\bN^{\alpha,Z})$ be the g-pair given by adjunction for $f:(X,\Gamma^{\alpha})\rightarrow Z$. By Theorem \ref{thmtech}, $\mld(Z\ni z, \Gamma^{\alpha}_Z,\bN^{\alpha,Z})\geq \delta=\delta(r,\epsilon)$. Denote the prime divisor $\overline{z}$ by $D$. Then the coefficient of $D$ in $\Gamma_Z^{\alpha}$ is bounded from above by $1-\delta$. This means that  $(X,\Gamma^{\alpha}+\delta f^*D)$ is lc over the generic point of $D$. Since $\Gamma^{\alpha}\geq B$, we deduce that $(X,B+\delta f^*D)$ is lc over the generic point of $D$.
\end{proof}

\begin{proof}[Proof of Theorem \ref{thm2}]
Let
  $$
  \Gamma^{\alpha}=\alpha B+(1-\alpha)\Delta \quad \text{ where } \alpha=1/r!
  $$
and let $(Z, \Gamma^{\alpha}_Z,\bN^{\alpha,Z})$ be the g-pair given by adjunction for $f:(X,\Gamma^{\alpha})\rightarrow Z$. By Theorem \ref{thmtech}, $\mld(Z\ni z, \Gamma^{\alpha}_Z,\bN^{\alpha,Z})\geq \delta(r,\epsilon)$.  Hence $\mld(Z\ni z, 0)\geq \delta(r,\epsilon)$ and the first assertion holds.

The second assertion is an immediate consequence of Theorem \ref{thm3} (taking $B=0$ in Theorem \ref{thm3}).
\end{proof}

\begin{proof}[Proof of Theorem \ref{thm1}]
This is a direct consequence of Theorem \ref{thm2}.
\end{proof}


\end{document}